\numberwithin{equation}{section}
\begin{document}

\newtheorem{thm}{Theorem}[section]
\newtheorem{cor}[thm]{Corollary}
\newtheorem{prop}[thm]{Proposition}
\newtheorem{conj}[thm]{Conjecture}
\newtheorem{lem}[thm]{Lemma}
\newtheorem{Def}[thm]{Definition}
\newtheorem{rem}[thm]{Remark}
\newtheorem{prob}[thm]{Problem}
\newtheorem{ex}{Example}[section]

\newcommand{\be}{\begin{equation}}
\newcommand{\ee}{\end{equation}}
\newcommand{\ben}{\begin{enumerate}}
\newcommand{\een}{\end{enumerate}}
\newcommand{\beq}{\begin{eqnarray}}
\newcommand{\eeq}{\end{eqnarray}}
\newcommand{\beqn}{\begin{eqnarray*}}
\newcommand{\eeqn}{\end{eqnarray*}}
\newcommand{\bei}{\begin{itemize}}
\newcommand{\eei}{\end{itemize}}

\newcommand{\pa}{{\partial}}
\newcommand{\V}{{\rm V}}
\newcommand{\R}{{\bf R}}
\newcommand{\K}{{\rm K}}
\newcommand{\e}{{\epsilon}}
\newcommand{\tomega}{\tilde{\omega}}
\newcommand{\tOmega}{\tilde{Omega}}
\newcommand{\tR}{\tilde{R}}
\newcommand{\tB}{\tilde{B}}
\newcommand{\tGamma}{\tilde{\Gamma}}
\newcommand{\fa}{f_{\alpha}}
\newcommand{\fb}{f_{\beta}}
\newcommand{\faa}{f_{\alpha\alpha}}
\newcommand{\faaa}{f_{\alpha\alpha\alpha}}
\newcommand{\fab}{f_{\alpha\beta}}
\newcommand{\fabb}{f_{\alpha\beta\beta}}
\newcommand{\fbb}{f_{\beta\beta}}
\newcommand{\fbbb}{f_{\beta\beta\beta}}
\newcommand{\faab}{f_{\alpha\alpha\beta}}

\newcommand{\pxi}{ {\pa \over \pa x^i}}
\newcommand{\pxj}{ {\pa \over \pa x^j}}
\newcommand{\pxk}{ {\pa \over \pa x^k}}
\newcommand{\pyi}{ {\pa \over \pa y^i}}
\newcommand{\pyj}{ {\pa \over \pa y^j}}
\newcommand{\pyk}{ {\pa \over \pa y^k}}
\newcommand{\dxi}{{\delta \over \delta x^i}}
\newcommand{\dxj}{{\delta \over \delta x^j}}
\newcommand{\dxk}{{\delta \over \delta x^k}}

\newcommand{\px}{{\pa \over \pa x}}
\newcommand{\py}{{\pa \over \pa y}}
\newcommand{\pt}{{\pa \over \pa t}}
\newcommand{\ps}{{\pa \over \pa s}}
\newcommand{\pvi}{{\pa \over \pa v^i}}
\newcommand{\ty}{\tilde{y}}
\newcommand{\bGamma}{\bar{\Gamma}}

\title {Concentration and relevant properties of Finsler metric measure manifolds\footnote{This work was supported by National Natural Science Foundation of China (Grant No. 12371051, 12141101)} }
\author{Xinyue Cheng and Yalu Feng\\
School of Mathematical Sciences, Chongqing Normal University, \\ Chongqing, 401331, P.R. China\\
E-mail: chengxy@cqnu.edu.cn and fengyl2824@qq.com}

\date{}

\maketitle

\begin{abstract}
In this paper, we study  systematically the concentration properties of Finsler metric measure manifolds. We establish the relationships between the concentration properties and  the observable diameter, isoperimetric inequalities and the first eigenvalue. In particular, as an application, we derive a Cheng type upper bound estimate for the first closed eigenvalue via the concentration property. The researches in this paper enrich and extend the concentration theory in Finsler geometry, even in irreversible metric measure spaces. \\
{\bf Keywords:} irreversible metric measure space; Finsler metric measure manifold; concentration; observable diameter; isoperimetric inequality; the first eigenvalue\\
{\bf Mathematics Subject Classification:} 53C60,  53B40, 58C35
\end{abstract}

\section{Introduction}\label{intr}

Let $\mathbb{R}^{\infty}$ be an infinite-dimensional Hilbert space and $S^{\infty}$ the unit sphere in $\mathbb{R}^{\infty}$. Consider a finite-dimensional unit sphere $S$ in $\mathbb{R}^{\infty}$, which is the intersection of $S^{\infty}$ with a finite-dimensional subspace in $\mathbb{R}^{\infty}$. The diameter of sphere $S$ is always equal to $\pi$ and the sectional curvature of $S$ is always equal to one. A natural question arises: How can one detect the dimension of $S$ and how to tell the difference between two finite-dimensional unit spheres in $\mathbb{R}^{\infty}$? A satisfactory answer to this question is provided by the  L\'{e}vy concentration theory of metric measure spaces \cite{Ly,SHEN}. The L\'{e}vy concentration theory of metric measure spaces has been developed further by Gromov-Milman \cite{Gr}, V.D. Milman \cite{Mil1, Mil2}, M. Talagrand \cite{Tal1, Tal2}, M. Gromov \cite{GROMOV}, M. Ledoux \cite{Le}, E. Milman \cite{EMil} and other geometers. The theory has many applications in various areas of mathematics, such as geometry, analysis, probability theory and discrete mathematics \cite{Le, MILSC}. As mentioned by M. Gromov, the concentration of measure was an elementary yet non-trivial observation \cite{Le}.

Finsler geometry provides a natural mathematical framework for describing a wide range of anisotropic phenomena. It is natural to extend the theory of concentration of measure to this geometric setting. In 1999, Gu-Shen \cite{GuShen} studied the expansion distance and the observable diameter in Finsler metric measure spaces, which were first introduced by M. Gromov \cite{GROMOV}. They obtained the upper bound estimates controlled by the first eigenvalue or the isoperimetric profile for these quantities on closed Finsler metric measure manifolds. They also derived a lower bound estimate for the expansion distance on compact reversible Finsler metric measure spaces whose Ricci curvature has a negative lower bound (see also \cite{SHEN}). Later, S. Ohta \cite{OHTA} derived a normal concentration property of a forward or backward complete Finsler metric measure space $(M, F, \mathfrak{m})$ by using a Talagrand-type inequality (also called the transportation cost inequality) under the conditions that $\mathfrak{m}(M)=1$ and the weighted Ricci curvature ${\rm Ric}_{\infty}$ has a positive lower bound.

In this paper, inspired by the works of Gromov-Milman \cite{Gr}, M. Ledoux \cite{Le} and Z. Shen \cite{SHEN}, we investigate systematically the concentration properties of Finsler metric measure manifolds. Our discussions involve the concentration function, the observable diameter, isoperimetric inequalities, the first eigenvalue and their relations.

The paper is organized as follows. Section \ref{pre} introduces some necessary definitions and notations on irreversible metric spaces and Finsler manifolds. In Section \ref{con}, we first study some important concentration properties of irreversible metric measure spaces, including normal concentration, exponential concentration, and $(p,q)-$Moment concentration. Then, as the special setting, we obtain the corresponding  concentration properties of Finsler metric measure manifolds. Section \ref{diacon} explores the relationship between observable diameter and the concentration function. Further, we derive a normal concentration property from an isoperimetric inequality in Section \ref{Iso}. Finally, in Section \ref{Spec},  we prove an exponential concentration property via the first eigenvalue and further derive a Cheng type upper bound for the first closed eigenvalue by concentration properties.

\section{Preliminaries}\label{pre}

In this section, we briefly review some essential definitions and notations in Finsler geometry.  It is worth to note that a Finsler space is not a metric measure space in the usual sense since the Finsler metric $F$ may be non-reversible, that is, $F(x, y)\neq F(x, -y)$ may occur for some $(x,y) \in TM$.
This non-reversibility causes the asymmetry of the associated distance function. Therefore, we begin by introducing the following irreversible metric spaces.

\subsection{Irreversible metric spaces}

\begin{Def}{\rm (\cite{KZ})}
Let $X$ be a set and $d: X \times X \rightarrow[0, \infty)$ be a function on $X$. The pair $(X, d)$ is called an irreversible metric space if for any $x_{1}, x_{2}, x_{3} \in X$:
\begin{itemize}
  \item [(i)] $d(x_{1}, x_{2}) \geq 0$, with equality if and only if $x_{1}= x_{2}$;
  \item [(ii)] $d(x_{1}, x_{3}) \leq d(x_{1}, x_{2})+d(x_{2}, x_{3})$.
\end{itemize}
In particular, if $(X, d)$ is an irreversible metric space, then $d$ is called a metric on $X$.

In addition, if for every $x_{1}, x_{2}\in X$, one has
\begin{itemize}
  \item [(iii)] $d(x_{1}, x_{2})=d(x_{2},x_{1})$,
\end{itemize}
the pair $(X, d)$ is called a reversible metric space.
\end{Def}

Since the metric $d$ of an irreversible metric space $(X, d)$ could be asymmetric, there are two kinds of balls, that is, forward and backward balls, respectively. More precisely, given any $r>0$ and a point $x \in X$, the forward ball $B_x^{+}(r)$ (resp., backward ball $B_x^{-}(r)$) of radius $r$ centered at $x$ is defined as
$$
B_x^{+}(r):=\{z \in X \mid d(x, z)<r\}, \quad B_x^{-}(r):=\{z \in X \mid d(z, x)<r\}.
$$

\subsection{Finsler manifolds}
Let $M$ be an $n$-dimensional smooth manifold and  $TM$ be the tangent bundle on $M$. Denote the elements in $TM$ by $(x, y)$ with $y \in T_{x} M$. Let $T M_{0}:=TM \backslash\{0\}$ and $\pi: T M_{0} \rightarrow M$ be the natural projective map. The pull-back $\pi^{*} T M$ is an $n$-dimensional vector bundle on $T M_0$. A Finsler metric on manifold $M$ is a function $F: T M \longrightarrow[0, \infty)$  satisfying the following properties:
\begin{itemize}
  \item [(1)] (Regularity) $F$ is $C^{\infty}$ on $TM_{0}$;
  \item [(2)] (Positive 1-homogeneity) $F(x,\lambda y)=\lambda F(x,y)$ for any $(x,y)\in TM$ and all $\lambda >0$;
  \item [(3)] (Strong convexity) $F$ is strongly convex, that is, the matrix $\left(g_{ij}(x,y)\right)=\left(\frac{1}{2}(F^{2})_{y^{i}y^{j}}\right)$ is positive definite for any nonzero $y\in T_{x}M$.
\end{itemize}
Such a pair $(M,F)$ is called a Finsler manifold and $g:=g_{ij}(x,y)dx^{i}\otimes dx^{j}$ is called the fundamental tensor of $F$. For a non-vanishing vector field $V$ on $M$, one introduces the weighted Riemannian metric $g_V$ on $M$ given by
$$
g_V(y, w)=g_{ij}(x, V_x)y^i w^j
$$
for $y,\, w\in T_{x}M$. In particular, $g_{V}(V,V)=F^{2}(x, V)$ for any $V \in T_{x}M$.

We define the reverse metric $\overleftarrow{F} $ of $F$ by $\overleftarrow{F}(x, y):=F(x,-y)$ for all $(x, y) \in T M$. It is easy to see that $\overleftarrow{F}$ is also a Finsler metric on $M$. A Finsler metric $F$ on $M$ is said to be reversible if $\overleftarrow{F}(x, y)=F(x, y)$ for all $(x, y) \in T M$. Otherwise, we say $F$ is irreversible.

Let $(M,F)$ be a Finsler manifold of dimension $n$. The pull-back $\pi ^{*}TM$ admits a unique linear connection, which is called the Chern connection. The Chern connection $D$ is determined by the following equations  \cite{BaoChern}
\beq
&& D^{V}_{X}Y-D^{V}_{Y}X=[X,Y], \label{chern1}\\
&& Zg_{V}(X,Y)=g_{V}(D^{V}_{Z}X,Y)+g_{V}(X,D^{V}_{Z}Y)+ 2C_{V}(D^{V}_{Z}V,X,Y) \label{chern2}
\eeq
for $V\in TM\backslash\{0\}$  and $X, Y, Z \in TM$, where
$$
C_{V}(X,Y,Z):=C_{ijk}(x,V)X^{i}Y^{j}Z^{k}=\frac{1}{4}\frac{\pa ^{3}F^{2}(x,V)}{\pa V^{i}\pa V^{j}\pa V^{k}}X^{i}Y^{j}Z^{k}
$$
is the Cartan tensor of $F$ and $D^{V}_{X}Y$ is the covariant derivative with respect to the reference vector $V$.

Given a non-vanishing vector field $V$ on $M$,  the Riemannian curvature  $R^V$ is defined by
$$
R^V(X, Y) Z=D_X^V D_Y^V Z-D_Y^V D_X^V Z-D_{[X, Y]}^V Z
$$
for any vector fields $X$, $Y$, $Z$ on $M$. For two linearly independent vectors $V, W \in T_x M \backslash\{0\}$, the flag curvature is defined by
$$
\mathcal{K}^V(V, W)=\frac{g_V\left(R^V(V, W) W, V\right)}{g_V(V, V) g_V(W, W)-g_V(V, W)^2}.
$$
Then the Ricci curvature is defined as
$$
\operatorname{Ric}(V):=F(x, V)^{2} \sum_{i=1}^{n-1} \mathcal{K}^V\left(V, e_i\right),
$$
where $e_1, \ldots, e_{n-1}, \frac{V}{F(V)}$ form an orthonormal basis of $T_x M$ with respect to $g_V$.

\subsection{Finsler Laplacian}

We denote a Finsler manifold $(M, F)$ equipped with a positive smooth measure $\mathfrak{m}$ by $(M, F, \mathfrak{m})$ which we call a Finsler metric measure manifold.

Given a Finsler structure $F$ on $M$,  there is a Finsler co-metric $F^{*}$ on $M$ which is non-negative function on the cotangent bundle $T^{*}M$ given by
\be
F^{*}(x, \xi):=\sup\limits_{y\in T_{x}M\setminus \{0\}} \frac{\xi (y)}{F(x,y)}, \ \ \forall \xi \in T^{*}_{x}M. \label{co-Finsler}
\ee
We call $F^{*}$ the dual Finsler metric of $F$.  For any vector $y\in T_{x}M\backslash\{0\}$, $x\in M$, the covector $\xi =g_{y}(y, \cdot)\in T^{*}_{x}M$ satisfies
\be
F(x,y)=F^{*}(x, \xi)=\frac{\xi (y)}{F(x,y)}. \label{shenF311}
\ee
Conversely, for any covector $\xi \in T_{x}^{*}M\backslash\{0\}$, there exists a unique vector $y\in T_{x}M\backslash\{0\}$ such that $\xi =g_{y}(y, \cdot)\in T^{*}_{x}M$ (Lemma 3.1.1, \cite{SHEN}). Naturally,  we define a map ${\cal L}: TM \rightarrow T^{*}M$ by
$$
{\cal L}(y):=\left\{
\begin{array}{ll}
g_{y}(y, \cdot), & y\neq 0, \\
0, & y=0.
\end{array} \right.
$$
It follows from (\ref{shenF311}) that
$$
F(x,y)=F^{*}(x, {\cal L}(y)).
$$
Thus ${\cal L}$ is a norm-preserving transformation. We call ${\cal L}$ the Legendre transformation on Finsler manifold $(M, F)$.

Let
$$
g^{*kl}(x,\xi):=\frac{1}{2}\left[F^{*2}\right]_{\xi _{k}\xi_{l}}(x,\xi).
$$
For any $\xi ={\cal L}(y)$, we have
$$
g^{*kl}(x,\xi)=g^{kl}(x,y),
$$
where $\left(g^{kl}(x,y)\right)= \left(g_{kl}(x,y)\right)^{-1}$.

Given a smooth function $u$ on $M$, the differential $d u_x$ at any point $x \in M$, $d u_x=\frac{\partial u}{\partial x^i}(x) d x^i$ is a linear function on $T_x M$. We define the gradient vector $\nabla u(x)$ of $u$ at $x \in M$ by $\nabla u(x):=\mathcal{L}^{-1}(d u(x)) \in T_x M$. In a local coordinate system, we can express $\nabla u$ as
\be \label{nabna}
\nabla u(x)= \begin{cases}g^{* i j}(x, d u) \frac{\partial u}{\partial x^i} \frac{\partial}{\partial x^j}, & x \in M_u, \\ 0, & x \in M \backslash M_u,\end{cases}
\ee
where $M_{u}:=\{x \in M \mid d u(x) \neq 0\}$ \cite{SHEN}. In general, $\nabla u$ is only continuous on $M$, but smooth on $M_{u}$. Further,  for $u \in W^{1,2}(M)$, noting that $\nabla u$ is weakly differentiable, the Finsler Laplacian  $\Delta u$ is defined by
$$
\int_M \phi \Delta u d \mathfrak{m}:=-\int_{M} d \phi(\nabla u) d\mathfrak{m}
$$
for $\phi \in \mathcal{C}_{0}^{\infty}(M)$, where ${\cal C}^{\infty}_{0} (M)$ denotes the set of all smooth compactly supported functions on $M$  \cite{SHEN}.

\subsection{Weighted Ricci curvature}
Let $(M, F, \mathfrak{m})$ be  an $n$-dimensional Finsler manifold $(M, F)$ equipped with a positive smooth measure $\mathfrak{m}$. Write the volume form $d \mathfrak{m}$ of  $\mathfrak{m}$ as $d \mathfrak{m} = \sigma(x) dx^{1} dx^{2} \cdots d x^{n}$. Define
\be\label{Dis}
\tau (x, y):=\ln \frac{\sqrt{{\rm det}\left(g_{i j}(x, y)\right)}}{\sigma(x)}.
\ee
We call $\tau$ the distortion of $F$. It is natural to study the rate of change of the distortion along geodesics. For a vector $y \in T_{x} M \backslash\{0\}$, let $ \gamma = \gamma(t)$ be the geodesic with $\gamma (0)=x$ and $\dot{\gamma}(0)=y.$  Set
\be
{\bf S}(x, y):= \frac{d}{d t}\left[\tau(\gamma (t), \dot{\gamma}(t))\right]|_{t=0}.
\ee
$\mathbf{S}$ is called the S-curvature of $F$ \cite{ChernShen}. Let $Y$ be a $C^{\infty}$ geodesic field on an open subset $U \subset M$ (i.e. all integral curves of $Y$ on $U$ are geodesics) and $\hat{g}=g_{Y}$ .  Let
\be
d \mathfrak{m}:=e^{- \psi} {\rm Vol}_{\hat{g}}, \ \ \ {\rm Vol}_{\hat{g}}= \sqrt{{det}\left(g_{i j}\left(x, Y_{x}\right)\right)}dx^{1} \cdots dx^{n}. \label{voldecom}
\ee
It is easy to see that $\psi$ is given by
$$
\psi (x)= \ln \frac{\sqrt{\operatorname{det}\left(g_{i j}\left(x, Y_{x}\right)\right)}}{\sigma(x)}=\tau\left(x, Y_{x}\right),
$$
which is just the distortion of $F$ along $Y_{x}$ at $x\in M$ \cite{ChernShen, SHEN}. Let $y := Y_{x}\in T_{x}M$ (that is, $Y$ is a geodesic extension of $y\in T_{x}M$). Then, by the definitions of the S-curvature, we have
\beqn
&&  {\bf S}(x, y)= Y[\tau(x, Y)]|_{x} = d \psi (y),  \\
&&  \dot{\bf S}(x, y)= Y[{\bf S}(x, Y)]|_{x} =y[Y(\psi)],
\eeqn
where $\dot{\bf S}(x, y):={\bf S}_{|m}(x, y)y^{m}$ and ``$|$" denotes the horizontal covariant derivative with respect to the Chern connection. Further, Ohta introduces the weighted Ricci curvatures in Finsler geometry in \cite{Ohta0} which are defined as follows \cite{Ohta0,OHTA}
\beq
{\rm Ric}_{N}(y)&=& {\rm Ric}(y)+ \dot{\bf S}(x, y) -\frac{{\bf S}(x, y)^{2}}{N-n},   \label{weRicci3}\\
{\rm Ric}_{\infty}(y)&=& {\rm Ric}(y)+ \dot{\bf S}(x, y). \label{weRicciinf}
\eeq
We say that Ric$_{N}\geq K$ for some $K\in \mathbb{R}$ if ${\rm Ric}_{N}(v)\geq KF^{2}(v)$ for all $v\in TM$, where $N\in \mathbb{R}\backslash\{n\}$ or $N= \infty$.

\subsection{Irreversible metric induced by a Finsler metric}

For $x_1, x_2 \in M$, the distance from $x_1$ to $x_2$ is defined by
$$
d_{F}\left(x_{1}, x_{2}\right):=\inf _\gamma \int_{0}^{1} F(\gamma (t), \dot{\gamma}(t)) d t,
$$
where the infimum is taken over all $C^1$ curves $\gamma:[0,1] \rightarrow M$ such that $\gamma(0)=$ $x_1$ and $\gamma(1)=x_2$.
Thus, $d_{F}: M\times M\rightarrow [0, \infty)$ is a continuous function with
\begin{itemize}
  \item [(1)] $d_{F}(x_{1}, x_{2})\geq 0$, with equality if and only if $x_{1}=x_{2}$;
  \item [(2)] $d_{F}(x_{1}, x_{2})\leq d_{F}(x_{1}, p)+d_{F}(p, x_{2})$, $\forall p\in M$.
\end{itemize}
However, $d_{F} \left(x_1, x_2\right) \neq d_{F} \left(x_2, x_1\right)$ unless $F$ is reversible.
We can define the forward and backward geodesic balls of radius $r$ with center at $x_{0}\in M$, respectively, as
$$
B_{x_0}^{+}(r):=\{x \in M \mid d_{F}(x_{0}, x)<r\},\qquad B_{x_{0}}^{-}(r):=\{x \in M \mid d_{F}(x, x_{0})<r\}.
$$

A $C^{\infty}$-curve $\gamma:[0,1] \rightarrow M$ is called a geodesic  if $F(\gamma, \dot{\gamma})$ is constant and it is locally minimizing.
The exponential map $\exp _x: T_x M \rightarrow M$ is defined by $\exp _x(v)=\gamma(1)$ for $v \in T_x M$ if there is a geodesic $\gamma:[0,1] \rightarrow M$ with $\gamma(0)=x$ and $\dot{\gamma}(0)=v$.  A Finsler manifold $(M, F)$ is said to be forward complete (resp. backward complete) if each geodesic defined on $[0, \ell)$ (resp. $(-\ell, 0])$ can be extended to a geodesic defined on $[0, \infty)$ (resp. $(-\infty, 0])$.  We say $(M, F)$ is complete if it is both forward complete and backward complete.

\section{Concentration of irreversible metric measure spaces}\label{con}

A triple $(X, d, \mu)$ is called an irreversible metric measure space if $(X, d)$ is an irreversible metric space endowed with a Borel probability measure $\mu$. The concentration function $\alpha_{(X, d, \mu)}(r)$ ($\alpha_{\mu}(r)$ for short) is defined by
\begin{equation}\label{confun}
\alpha_{(X, d, \mu)}(r):=\sup\limits_{A}\left\{1-\min\{\mu\left(B^{+}(A, r)\right), \mu\left(B^{-}(A, r)\right)\} \mid A\subset X, \mu(A)\geq \frac{1}{2}\right\}
\end{equation}
for $ r>0$, where $A$ is a Borel set, $B^{+}(A, r):=\{x\in X \mid \inf\limits_{z\in A}d(z, x)<r\}$ and $B^{-}(A, r):=\{x\in X \mid \inf\limits_{z\in A}d(x, z)<r\}$ are the forward and backward open $r$-neighborhood of $A$, respectively. It is worth noting that, due to the asymmetry of the metric $d$, the concentration function $\alpha_{\mu}(r)$ we define here is slightly different from the definitions given by M. Ledoux \cite{Le} and S. Ohta \cite{OHTA}. From the definition of the concentration function $\alpha_{\mu}(r)$, we observe that $\alpha_{\mu}(r)$ is less than or equal to $\frac{1}{2}$ and decreases to $0$ as $r\rightarrow +\infty$.
In particular, we say that $\mu$ has normal concentration on $(X, d)$ if there are positive constants $C$ and $c$ such that, for every $r>0$,
\begin{equation}\label{normal}
\alpha_{(X, d, \mu)}(r)\leq C {\rm e}^{-cr^2}.
\end{equation}
We also say that $\mu$ has exponential concentration if there are positive constants $C$ and $c$ such that, for every $r>0$,
\begin{equation}\label{exp}
\alpha_{(X, d, \mu)}(r)\leq C {\rm e}^{-cr}.
\end{equation}

In the following, we will describe the concentration properties of the Lipschitz function $f$ around the median value $m_{f}$.
\begin{Def}{\rm (\cite{KZ})}
Let $(X, d)$ be an irreversible metric space. A function $f$: $X\rightarrow \mathbb{R}$ is said to be a $L$-Lipschitz function if there exists a constant $L>0$ such that
$$
f(z)-f(x)\leq L d(x,z), \ \ \ \ \text{for all} \ x, z\in X.
$$
We denote by ${\rm Lip}_{L}(X)$ the set of all $L$-Lipschitz functions on $X$.
\end{Def}

If $\mu$ is a probability measure on the Borel sets of $(X, d)$ and $f$ is a measurable real-valued function on $(X, d)$, we say that $m_{f}$ is a median of $f$ for $\mu$ if
$$
\mu(\{x\in X \mid f(x)\leq m_{f}\})\geq \frac{1}{2} \ \ \text{and} \ \ \mu(\{x\in X \mid f(x)\geq m_{f}\})\geq \frac{1}{2}.
$$

Let $f\in{\rm Lip}_{L}(X)$ and $A:=\{x\in X|f(x)\leq m_{f}\}$. Then we have $f(z)\leq m_{f}+L d(x, z)$ for any $x\in A$ and $z\in X$. If $z\in B^{+}(A, r)$, then $f(z)< m_{f}+Lr$. Thus $\mu(B^{+}(A, r))\leq \mu(\{z\in X \mid f(z)<m_{f}+Lr\})$. Since $\mu (A)\geq \frac{1}{2}$, by the definition of the concentration function $\alpha_{\mu}(r)$, we have
\beq
\mu(\{z\in X \mid f(z)\geq m_{f}+r\})\leq \alpha_{\mu}\left(r/L\right).\label{mf1}
\eeq
Similarly, let $A':=\{x\in X \mid f(x)\geq m_{f}\}$.  We can get $f(z)\geq f(x)-L d(z, x)\geq m_{f}-Ld(z, x)$ for any $x\in A'$ and $z\in X$. If $z\in B^{-}(A', r)$, then $f(z)> m_{f}-Lr$. Thus we have
\be
\mu(\{z\in X \mid f(z)\leq m_{f}-r\})\leq \alpha_{\mu}\left(r/L\right). \label{mf2}
\ee
Therefore, together (\ref{mf2}) with (\ref{mf1}), we obtain that for every $r>0$,
\beq
\mu(\{z\in X \mid |f(z)- m_{f}|\geq r\})\leq 2\alpha_{\mu}\left(r/L\right).\label{mf3}
\eeq

\begin{prop}\label{f-mean}
Let $(X, d, \mu)$ be an irreversible metric measure space and $\beta(r)$ be a non-negative decreasing function on $(0, \infty)$.
\bei
\item[{\rm (1)}] If $\mu\left(\{x\in X| \ |f(x)-\int_{X} f d\mu|\geq r\}\right)\leq \beta(r)$ for any 1-Lipschitz function $f$ and any $r>0$, then
\be
1-\mu(B^{+}(A, r))\leq \beta (\mu(A) r) \ \ \text{and} \ \ 1-\mu(B^{-}(A, r))\leq \beta (\mu(A) r) \label{3.2-1}
\ee
for every Borel set $A$ with $\mu(A)>0$. In particular, $\alpha_{\mu}(r)\leq \beta(r/2)$ for every $r>0$.

\item[{\rm (2)}]  If $\mu(\{x\in X \mid |f(x)- m_{f}|\geq r\})\leq \beta(r)$ for any 1-Lipschitz function $f$ and any $r>0$, and $\bar{\beta}:=\int_{0}^{\infty}\beta(r)dr<\infty$, then
\be
\mu (\{x\in X \mid |f(x)-\int_{X} f d\mu |\geq r+\bar{\beta}\} )\leq \beta(r). \label{3.1-2}
\ee
In particular, if $\beta(r)\leq C {\rm e}^{-c r^{p}}$ for any $r>0$ and some constants $c>0$, $C>0$ and $0<p<+\infty$,  then
\be
\mu (\{x\in X \mid |f(x)- \int_{X} f d\mu |\geq r \} )\leq C'{\rm e}^{-\kappa_{p} cr^{p}}, \label{3,1-3}
\ee
where $C':=\max\{C,1\}{\rm e}^{\mathfrak{c}_{p}^{p}C^{p}}$, $\mathfrak{c}_{p}:=\Gamma(\frac{1}{p}+1)$ and $\kappa_{p}:=\min\{1, 2^{1-p}\}$.
\eei
\end{prop}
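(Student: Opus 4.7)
The plan is to reduce both parts to the hypothesis applied to carefully chosen $1$-Lipschitz test functions. For part (1), fix a Borel set $A$ with $\mu(A)>0$ and a parameter $t>0$, and introduce the truncated forward-distance
\[
g(x):=\min\{d(A,x),t\},\qquad d(A,x):=\inf_{z\in A}d(z,x),
\]
which is $1$-Lipschitz by the triangle inequality. Since $g\equiv 0$ on $A$ and $g\le t$ on $X$, one has $\int_{X} g\,d\mu\le t(1-\mu(A))$; hence on $\{d(A,x)\ge t\}=\{g=t\}$ it follows that $g(x)-\int_{X} g\,d\mu\ge t\mu(A)$. Applying the hypothesis together with the monotonicity of $\beta$ then yields $1-\mu(B^{+}(A,t))\le\beta(t\mu(A))$.

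The backward bound is the only step where the asymmetry of $d$ requires care: $d(x,A):=\inf_{z\in A}d(x,z)$ is only $1$-Lipschitz with respect to the reverse metric, so I would work instead with $h(x):=-\min\{d(x,A),t\}$. The inequality $d(x,A)-d(z,A)\le d(x,z)$ makes $h$ genuinely $1$-Lipschitz for $d$; since $h\equiv 0$ on $A$ and $h\ge -t$, we get $\int_{X} h\,d\mu\ge -t(1-\mu(A))$, and on $\{d(x,A)\ge t\}$ one has $h(x)=-t$ so that $|h(x)-\int_{X} h\,d\mu|\ge t\mu(A)$. The hypothesis then gives $1-\mu(B^{-}(A,t))\le\beta(t\mu(A))$. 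The statement $\alpha_{\mu}(r)\le\beta(r/2)$ follows by restricting to Borel sets $A$ with $\mu(A)\ge 1/2$, applying monotonicity of $\beta$ to get $\beta(\mu(A)r)\le\beta(r/2)$, and taking the supremum in (\ref{confun}).

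For part (2), the general inequality (\ref{3.1-2}) reduces to controlling the median--mean gap via the layer cake:
\[
\Bigl|m_{f}-\int_{X} f\,d\mu\Bigr|\le\int_{X}|f-m_{f}|\,d\mu=\int_{0}^{\infty}\mu(\{|f-m_{f}|\ge r\})\,dr\le\bar{\beta},
\]
after which the inclusion $\{|f-\int f\,d\mu|\ge r+\bar{\beta}\}\subseteq\{|f-m_{f}|\ge r\}$ plus the hypothesis closes the argument. For the refinement (\ref{3,1-3}) under $\beta(r)\le Ce^{-cr^{p}}$, the change of variable $u=cr^{p}$ gives $\bar{\beta}\le Cc^{-1/p}\mathfrak{c}_{p}$, so $c\bar{\beta}^{p}\le C^{p}\mathfrak{c}_{p}^{p}$. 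Using (\ref{3.1-2}) with $s=r-\bar{\beta}$ for $r\ge\bar{\beta}$, the task becomes the elementary inequality $(r-\bar{\beta})^{p}\ge\kappa_{p}r^{p}-\bar{\beta}^{p}$, which follows from the convex estimate $(a+b)^{p}\le 2^{p-1}(a^{p}+b^{p})$ when $p\ge 1$ and from subadditivity $(a+b)^{p}\le a^{p}+b^{p}$ when $p\le 1$; this upgrades the bound to $Ce^{C^{p}\mathfrak{c}_{p}^{p}}e^{-c\kappa_{p}r^{p}}$. For $r<\bar{\beta}$ the claim $1\le C'e^{-c\kappa_{p}r^{p}}$ is trivial once $C'\ge e^{C^{p}\mathfrak{c}_{p}^{p}}$, and absorbing constants gives the stated $C'=\max\{C,1\}e^{\mathfrak{c}_{p}^{p}C^{p}}$. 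The main obstacle throughout is precisely the Lipschitz compatibility in the asymmetric setting, which forces the passage to $-d(\cdot,A)$ in the backward estimate.
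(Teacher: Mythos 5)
Your proposal is correct and follows essentially the same route as the paper: in part (1) your test functions $\min\{d(A,\cdot),t\}$ and $-\min\{d(\cdot,A),t\}$ are exactly the paper's $\min\{d(A,x),r\}$ and $\max\{-d(x,A),-r\}$, and in part (2) you reproduce the paper's argument (layer-cake bound $|m_f-\int_X f\,d\mu|\le\bar{\beta}$, the inclusion of deviation sets, and the same case split $r\lessgtr\bar{\beta}$ using $(a+b)^p\le\max\{1,2^{p-1}\}(a^p+b^p)$). No gaps.
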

\begin{proof}
(1) Take $A$ with $\mu(A)>0$ and fix arbitrarily a $r>0$. Consider $f(x)=\min\{d(A, x), r\}$, $x\in X$. Clearly, $f$ is a 1-Lipschitz function. Noticing that
$$
\int_{X} f d\mu =\int_{A} f d\mu + \int_{M\setminus A} f d\mu \leq r \mu (M\setminus A) = r(1-\mu(A))
$$
and
$$
\{x\in X \mid d(A, x)\geq r\}\subset \{x\in X \mid f(x)= r\},
$$
we have that
$$
1-\mu (B^{+}(A,r) )\leq \mu ( \{x\in X \mid f(x)= r \} )\leq \mu ( \{x\in X \mid f(x)\geq \int_{X} f d\mu+\mu(A)r \} ).
$$
By the hypothesis, we obtain that $1-\mu(B^{+}(A,r))\leq \beta (\mu(A) r)$.

Similarly, let $f(x)=\max\{-d(x, A), -r\}$, $x\in X$. It is easy to see that $f$ is also a 1-Lipschitz function and $-\int_{X} f d\mu\leq r(1-\mu(A))$. Then we obtain the following
$$
1-\mu (B^{-}(A,r) )\leq \mu ( \{x\in X \mid f(x)= -r \} )\leq \mu ( \{x\in X \mid \int_{X} f d\mu\geq f(x)+\mu(A)r \})\leq \beta \left(\mu(A) r\right).
$$

In particular, for any Borel set $A$ with $\mu(A)\geq \frac{1}{2}$, by (\ref{3.2-1}) and the definition of $\alpha_{\mu}(r)$, we have $\alpha_{\mu}(r)\leq \beta(r/2)$.
\vskip 2mm

(2) If $\mu(\{x\in X \mid |f(x)- m_{f}|\geq r\})\leq \beta(r)$ and $\bar{\beta}=\int_{0}^{\infty}\beta(r)dr<\infty$, then,   we have
\beq
\left|\int_{X} f d\mu - m_{f}\right| &\leq & \int_{X} |f-m_{f} | d\mu  =\int_{X} \int_{0}^{\infty} {\bf 1}_{\{x \in X \mid | f(x) -m_{f}|> r\}}(x) dr d \mu \nonumber\\
& = &\int_{0}^{\infty} \int_{X} {\bf 1}_{\{x \in X \mid  | f(x)-m_{f}| > r \}}(x) d \mu d r =\int_{0}^{\infty} \mu (\{x \in X \mid | f(x) -m_{f}|> r \}) dr \nonumber \\
& \leq & \bar{\beta}, \label{layercake}
\eeq
where ${\bf 1}_A$ denotes the characteristic function of set $A$.

From (\ref{layercake}), we know that $\{x\in X \mid |f(x)-\int_{X} f d\mu|\geq r+\bar{\beta}\}\subset \{x\in X \mid |f(x)- m_{f}|\geq r\}$. Then
\be
\mu ( \{x\in X \mid |f(x)-\int_{X} f d\mu |\geq r+\bar{\beta} \} )\leq \beta(r).\label{betar}
\ee

In particular, if $\beta(r)\leq C {\rm e}^{-c r^{p}}$ for any $r>0$ with $c>0$, $C>0$ and $0<p<+\infty$, then
$$
\bar{\beta}\leq C\int_{0}^{\infty} {\rm e}^{-c r^{p}} dr=Cc^{-\frac{1}{p}}\Gamma(\frac{1}{p}+1)=:Cc^{-\frac{1}{p}}\mathfrak{c}_{p},
$$
where $\mathfrak{c}_{p}=\Gamma(\frac{1}{p}+1)$.

On the one hand, when $r\leq \bar{\beta}$, we have from $\bar{\beta}\leq Cc^{-\frac{1}{p}}\mathfrak{c}_{p}$ that
\be
\mu ( \{x\in X \mid |f(x)-\int_{X} f d\mu |\geq r \} )\leq 1\leq {\rm e}^{\mathfrak{c}_{p}^{p}C^{p}}{\rm e}^{-c\bar{\beta}^{p}} \leq  {\rm e}^{\mathfrak{c}_{p}^{p}C^{p}}{\rm e}^{-cr^{p}}.\label{betar-1}
\ee
On the other hand, when $r>\bar{\beta}$, by letting $s=r+\bar{\beta}$ in (\ref{betar}), we have
$$
\mu ( \{x \in X \mid | f(x)-\int_{X} f d\mu | \geq s \} ) \leq \beta(s-\bar{\beta}).
$$
Replacing $s$ by $r$ yields
\be
\mu ( \{ x \in X \mid | f(x)-\int_{X} f d\mu | \geq r \} ) \leq \beta(r-\bar{\beta}) \leq C e^{-c (r-\bar{\beta})^p}.  \label{betaba}
\ee
By using the inequality  $(a+b)^{p}\leq \max\{1, 2^{p-1}\}(a^{p}+b^{p})$ for $a, b\geq 0$, we have
$$
r^{p} \leq \max \{1,2^{p-1} \}\left((r-\bar{\beta})^{p}+\bar{\beta}^p\right),
$$
from which, we obtain that $(r-\bar{\beta})^{p} \geq \min \left\{1,2^{1-p}\right\} r^{p}-\bar{\beta}^p$. Thus we can get from (\ref{betaba}) that
\be
\mu ( \{x\in X \mid |f(x)-\int_{X} f d\mu |\geq r \} )  \leq  C{\rm e}^{c \bar{\beta}^{p}}{\rm e}^{-c\min\{1, 2^{1-p}\}r^{p}} \leq  C{\rm e}^{\mathfrak{c}_{p}^{p}C^{p}}{\rm e}^{-c\min\{1, 2^{1-p}\}r^{p}}.\label{betar-2}
\ee
 Combining (\ref{betar-1}) with (\ref{betar-2}) yields
$$
\mu ( \{x\in X \mid |f(x)-\int_{X} f d\mu |\geq r \} )\leq C' {\rm e}^{-c\kappa_{p}r^{p}},
$$
where $C':=\max\{C,1\}{\rm e}^{\mathfrak{c}_{p}^{p}C^{p}}$ and $\kappa_{p}:=\min\{1, 2^{1-p}\}$. This completes the proof.
\end{proof}

From Proposition \ref{f-mean}, we can prove the following theorem.

\begin{thm}\label{cor-nor}
Let $(X, d, \mu)$ be an irreversible metric measure space. Then $(X, d, \mu)$ has normal concentration (\ref{normal}) if and only if for every 1-Lipschitz function $f$ on $(X, d, \mu)$,
\be
\mu(\{x\in X \mid |f(x)- \int_{X} f d\mu|\geq r\})\leq C'{\rm e}^{-c'r^{2}}, \label{th3.3}
\ee
where $C'= C'(C)$ and $c'=c'(c)$ are constants.
\end{thm}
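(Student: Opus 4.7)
The plan is to prove both directions of the equivalence by invoking Proposition \ref{f-mean}, which packages the transitions between concentration around the median, concentration around the mean, and the concentration function $\alpha_\mu$.

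For the easier direction ($\Leftarrow$), I would assume that every $1$-Lipschitz function $f$ satisfies $\mu(\{|f-\int_X f\, d\mu|\geq r\})\leq C' e^{-c' r^2}$ and set $\beta(r):=C' e^{-c' r^2}$, which is a non-negative decreasing function on $(0,\infty)$. Then Proposition \ref{f-mean}(1) immediately yields $\alpha_\mu(r)\leq \beta(r/2)= C' e^{-c' r^2/4}$, which is exactly the normal concentration inequality (\ref{normal}) with constants $C=C'$ and $c=c'/4$.

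For the forward direction ($\Rightarrow$), I would start with normal concentration $\alpha_\mu(r)\leq C e^{-cr^2}$ and first pass to concentration around the median. Applying (\ref{mf3}) with $L=1$ to an arbitrary $1$-Lipschitz function $f$ with median $m_f$ gives
\[
\mu\bigl(\{x\in X\mid |f(x)-m_f|\geq r\}\bigr)\leq 2\alpha_\mu(r)\leq 2C e^{-cr^2}.
\]
Now I would take $\beta(r):=2C e^{-cr^2}$ in Proposition \ref{f-mean}(2). Since $\beta$ has the form $\tilde{C} e^{-cr^p}$ with $\tilde{C}=2C$ and $p=2$, the proposition yields
\[
\mu\Bigl(\Bigl\{x\in X\,\Big|\,\Bigl|f(x)-\int_X f\, d\mu\Bigr|\geq r\Bigr\}\Bigr)\leq C'' e^{-\kappa_2 c r^2},
\]
with $\kappa_2=\min\{1,2^{-1}\}=1/2$ and $C''=\max\{2C,1\}\exp(\mathfrak{c}_2^{2}(2C)^{2})$. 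This is (\ref{th3.3}) with $c'=c/2$ and $C'=C''$, both depending only on $C$ and $c$.

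No substantial obstacle is anticipated: Proposition \ref{f-mean} already provides the two-way bridge between the deviation inequalities for Lipschitz functions and the concentration function, and (\ref{mf3}) handles the switch from mean to median that is needed to feed Proposition \ref{f-mean}(2). The only minor point to be careful about is keeping track of how the constants transform, in particular the factor $\kappa_p=1/2$ for $p=2$ and the multiplicative factor of $2$ coming from (\ref{mf3}); these only affect the constants $C',c'$ but do not change the exponential rate $r^2$, which is what the statement requires.
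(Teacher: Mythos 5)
Your proposal is correct and follows essentially the same route as the paper: the forward direction passes from $\alpha_\mu$ to median concentration via (\ref{mf3}) and then applies Proposition \ref{f-mean}(2) with $p=2$ and $\beta(r)=2Ce^{-cr^2}$, and the backward direction applies Proposition \ref{f-mean}(1) with $\beta(r)=C'e^{-c'r^2}$; even your constants ($c'=c/2$, $C'=\max\{2C,1\}e^{4\Gamma(3/2)^2C^2}$, and $C=C'$, $c=c'/4$) coincide with those in the paper's proof.
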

\begin{proof}
If $\alpha_{(X, d, \mu)}(r)\leq C {\rm e}^{-cr^2}$ for any $r>0$, by (\ref{mf3}), $\mu\left(\left\{z \in X \mid |f(z)-m_{f}| \geq r\right\}\right) \leq 2 \alpha_{\mu}(r) \leq 2 C e^{-c r^{2}}$. Then, by Proposition \ref{f-mean}(2) with $p=2$ and $\beta(r) =2 C e^{-c r^2}$, we have the desired result, i.e. (\ref{th3.3}). In this case, $C' =\max \{2C, 1\} e^{4\Gamma (\frac{3}{2})^{2}C^{2}}$ and $c'=\frac{1}{2}c$.

Conversely, assume that (\ref{th3.3}) holds. Then, from Proposition \ref{f-mean} (1) with  $\beta (r)= C' e^{-c' r^2}$, we can derive $\alpha_{\mu}(r) \leq \beta(r/2)= C e^{-cr^{2}}$, where $C =C', c=\frac{1}{4}c'$.
\end{proof}

Similarly, we can also prove the following theorem from Proposition \ref{f-mean}.

\begin{thm}\label{cor-ex}
Let $(X, d, \mu)$ be an irreversible metric measure space. Then $(X, d, \mu)$ has exponential concentration (\ref{exp}) if and only if for every 1-Lipschitz function $f$ on $(X, d, \mu)$,
\be
\mu(\{x\in X \mid |f(x)- \int_{X} f d\mu|\geq r\})\leq C'{\rm e}^{-c'r},
\ee
where $C'= C'(C)$ and $c'=c'(c)$ are constants.
\end{thm}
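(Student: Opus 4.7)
The plan is to imitate the proof of Theorem \ref{cor-nor} essentially verbatim, replacing the Gaussian exponent $p=2$ by the exponential exponent $p=1$ throughout. Both implications are immediate from Proposition \ref{f-mean} once the constants $\mathfrak{c}_p$ and $\kappa_p$ are evaluated at $p=1$, namely $\mathfrak{c}_1=\Gamma(2)=1$ and $\kappa_1=\min\{1,2^0\}=1$.

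For the forward direction, I would begin by assuming $\alpha_{(X,d,\mu)}(r)\le C\mathrm{e}^{-cr}$ for all $r>0$. Applying inequality (\ref{mf3}) to any $1$-Lipschitz function $f$ with median $m_f$ yields
\[
\mu(\{x\in X\mid |f(x)-m_f|\ge r\})\le 2\alpha_\mu(r)\le 2C\mathrm{e}^{-cr}.
\]
The function $\beta(r):=2C\mathrm{e}^{-cr}$ is nonnegative and decreasing, with $\bar\beta=\int_0^\infty\beta(r)\,dr=2C/c<\infty$, so the hypothesis of Proposition \ref{f-mean}(2) is satisfied. Invoking that proposition with $p=1$ then gives
\[
\mu\bigl(\bigl\{x\in X\mid |f(x)-\textstyle\int_X f\,d\mu|\ge r\bigr\}\bigr)\le C'\mathrm{e}^{-c'r},
\]
where $C'=\max\{2C,1\}\mathrm{e}^{2C}$ and $c'=c$.

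For the converse, assume the inequality in the statement holds with constants $C',c'>0$. Setting $\beta(r):=C'\mathrm{e}^{-c'r}$, which is again nonnegative and decreasing, Proposition \ref{f-mean}(1) produces
\[
\alpha_\mu(r)\le \beta(r/2)=C'\mathrm{e}^{-c'r/2},
\]
which is exponential concentration with $C=C'$ and $c=c'/2$. The entire argument is mechanical: no new geometric or analytic input is required beyond Proposition \ref{f-mean}, and the only obstacle, if one can call it that, is bookkeeping of the constants through the two applications of the proposition.
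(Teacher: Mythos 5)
Your proposal is correct and follows exactly the route the paper intends: the paper omits the proof of this theorem with the remark that it is proved ``similarly'' from Proposition \ref{f-mean}, i.e.\ by repeating the argument of Theorem \ref{cor-nor} with $p=1$ in place of $p=2$, which is precisely what you do, and your constants ($C'=\max\{2C,1\}\mathrm{e}^{2C}$, $c'=c$ in the forward direction; $C=C'$, $c=c'/2$ in the converse) agree with what Proposition \ref{f-mean} yields at $p=1$.
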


In \cite{EMil}, E. Milman introduced the definitions of the First-Moment concentration and linear tail-decay on reversible metric measure spaces. In the following, inspired by \cite{EMil}, we define the $(p, q)$-Moment concentration and linear tail-decay on an irreversible metric measure space $(X, d, \mu)$. Furthermore, we explore the relationship between the $(p, q)$-Moment concentration, linear tail-decay, the normal concentration, and the exponential concentration.
\begin{Def}
Given $p, q\geq 1$.  The space $(X, d, \mu)$ is said to satisfy $(p, q)$-Moment concentration if there is a positive constant $C$ such that for any  1-Lipschitz function $f$,
\be
\|f-\int_{X}f d\mu \|_{L^{q}(\mu)}^{p}\leq \frac{q}{C}. \label{pqmo}
\ee
\end{Def}
Actually, the $(1,1)$-Moment concentration is just the generalization of the First-Moment concentration  on reversible metric measure spaces defined by E. Milman \cite{EMil}.
\begin{Def}\label{tail}
The space $(X, d, \mu)$ is said to satisfy linear tail-decay if there is a positive constant $C$ such that for any $r>0$ and any 1-Lipschitz function $f$,
\be
\mu ( \{x\in X \mid |f(x)-\int_{X} f d\mu | \geq r \} )\leq\frac{1}{Cr}. \label{Ltd}
\ee
\end{Def}

\begin{thm}\label{alpha-mean}
The normal concentration implies $(2, q)$-Moment concentration on an irreversible metric measure space  $(X, d, \mu)$.
\end{thm}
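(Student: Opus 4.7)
The plan is to deduce the $(2,q)$-Moment concentration directly from the Gaussian tail bound for Lipschitz functions that Theorem \ref{cor-nor} provides. First, I would invoke Theorem \ref{cor-nor}: normal concentration $\alpha_\mu(r)\le C\mathrm{e}^{-cr^2}$ gives, for every $1$-Lipschitz function $f$,
\[
\mu\!\left(\left\{x\in X:\Big|f(x)-\int_X f\,d\mu\Big|\ge r\right\}\right)\le C'\mathrm{e}^{-c'r^2}
\]
with constants $C',c'$ depending only on $C,c$. This reduces the task to an integration exercise.

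Next, I would apply the layer-cake representation to the $L^q$-norm of $g:=f-\int_X f\,d\mu$:
\[
\|g\|_{L^q(\mu)}^{q}=\int_X |g|^q\,d\mu=\int_0^{\infty} q r^{q-1}\mu(\{|g|\ge r\})\,dr\le C' q\int_0^{\infty} r^{q-1}\mathrm{e}^{-c'r^2}\,dr.
\]
The substitution $u=c' r^2$ converts the integral to a Gamma function, yielding
\[
\|g\|_{L^q(\mu)}^{q}\le \frac{C'\,q}{2\,(c')^{q/2}}\,\Gamma\!\left(\tfrac{q}{2}\right)=\frac{C'}{(c')^{q/2}}\,\Gamma\!\left(\tfrac{q}{2}+1\right).
\]

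The main bookkeeping step is extracting the correct linear-in-$q$ behaviour after taking the $q$-th root and squaring. I would bound $\Gamma(q/2+1)\le (q/2)^{q/2}$ (a consequence of $\Gamma(t+1)\le t^{t}$ valid for $t\ge 1$, or Stirling), getting
\[
\|g\|_{L^q(\mu)}^{2}\le (C')^{2/q}\cdot\frac{q}{2c'}.
\]
Since $(C')^{2/q}\le \max\{1,C'\}^{2}$ uniformly in $q\ge 1$, setting $\tilde C:=2c'/\max\{1,C'\}^{2}$ yields $\|g\|_{L^q(\mu)}^{2}\le q/\tilde C$, which is precisely the $(2,q)$-Moment concentration (\ref{pqmo}).

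The only subtle point is the passage from the integrated tail bound to the form $q/\tilde C$: one must use a Stirling-type estimate for $\Gamma(q/2+1)$ to cancel the $(c')^{-q/2}$ factor and isolate a clean linear factor of $q$, and one must absorb the $q$-th root of the prefactor into a $q$-independent constant. Everything else is a direct computation built on Theorem \ref{cor-nor} and Fubini.
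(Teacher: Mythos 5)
Your argument is correct and follows essentially the same route as the paper's proof: reduce to the Gaussian tail bound for $f-\int_X f\,d\mu$ via Theorem \ref{cor-nor}, apply the layer-cake formula, evaluate the resulting Gamma integral, and absorb the prefactor with a Stirling-type bound to extract the linear-in-$q$ behaviour. The only blemish is that $\Gamma(q/2+1)\le (q/2)^{q/2}$ fails for $q\in[1,2)$ (e.g.\ $\Gamma(3/2)\approx 0.886>2^{-1/2}$), but this costs only a bounded constant factor (or handle $1\le q<2$ via $\|g\|_{L^q(\mu)}\le\|g\|_{L^2(\mu)}$ on the probability space), and is no worse than the paper's own use of Stirling's asymptotic as an inequality.
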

\begin{proof}
Assume that $\alpha_{(X, d, \mu)}(r)\leq C {\rm e}^{-cr^2}$ for some positive constants $c$ and $C$ and any $r>0$. Then for every 1-Lipschitz  function $f$ and every $r>0$, we have from Theorem \ref{cor-nor} that
$$
\mu(\{x\in X \mid |f(x)- \int_{X} f d\mu|\geq r\})\leq C'{\rm e}^{- c'r^{2}},
$$
It follows  that
\beqn
\|f-\int_{X} f d \mu \|_{L^{q}(\mu)}^{q} & = &\int_{X} \int_{0}^{\infty} q r^{q-1} {\bf 1}_{ \{x \in X \mid | f(x)-\int_{X} f d \mu | >r \}}(x) d r d \mu \\
& = &\int_{0}^{\infty} \int_{X} q r^{q-1} {\bf 1}_{ \{x \in X \mid | f(x)-\int_{X} f d \mu | > r \}}(x) d \mu d r \\
& =& \int_{0}^{\infty} q r^{q-1} \mu ( \{ x \in X \mid | f(x)-\int_{X} f d \mu |>r \} ) d r \\
& \leq & q C' \int_{0}^{\infty} r^{q-1} e^{-c' r^2} d r = c' \left(\frac{1}{c'}\right)^{\frac{q}{2}} \Gamma\left(\frac{q}{2}+1\right),
\eeqn
where ${\bf 1}_A$ denotes the characteristic function of set $A$. Because $\Gamma\left(\frac{q}{2}+1\right)$ is a strictly increasing function for $q\geq 1$, by the Stirling formula of Gamma function, that is, $\Gamma(\frac{q}{2}+1)\sim \sqrt{2\pi} (q/2)^{q/2+1/2} {\rm e}^{-q/2}$ ($q\rightarrow\infty$), we have the following
$$
\|f-\int_{X} f d\mu \|_{L^{q}(\mu)}^{q}\leq \sqrt{2\pi} C'\left(\frac{q}{c'}\right)^{\frac{q}{2}} \left(\frac{q}{2}\right)^{\frac{1}{2}} 2^{-q/2} e^{-q/2}.
$$
Then
\be
\|f-\int_{X} f d\mu \|_{L^{q}(\mu)}\leq \sqrt{2\pi}C' {\rm e}^{1/(4{\rm e})-1/2}\sqrt{\frac{q}{c'}}  \label{nomo}
\ee
for any $q\geq 1$, where we have used that $\left(\frac{q}{2}\right)^{1/(2q)}\leq {\rm e}^{1/(4{\rm e})}$.  From (\ref{nomo}) we can conclude that the space $(X, d, \mu)$ satisfies $(2, q)$-Moment concentration.
\end{proof}

Conversely, we have the following theorem.

\begin{thm}\label{Moment-nol}
Let $(X, d, \mu)$ be an irreversible metric measure space. If $(X, d, \mu)$ satisfies $(2, q)$-Moment concentration (\ref{pqmo}) for $q\geq 1$ , then $(X, d, \mu)$ satisfies linear tail-decay when $0<r < \sqrt{\frac{\rm e}{C}}$ and  has normal concentration when $r> \sqrt{\frac{\rm e}{C}}$.
\end{thm}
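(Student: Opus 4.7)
The plan is to combine Markov's inequality with the $(2,q)$-Moment bound and then optimize the free parameter $q \in [1,\infty)$, exactly as in the classical Herbst-style argument.

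First, for any $1$-Lipschitz function $f$ and any $q \ge 1$, Markov's inequality together with the hypothesis (\ref{pqmo}) yields
\begin{equation*}
\mu\!\left(\left\{x\in X : \left|f(x)-\int_{X} f\,d\mu\right|\ge r\right\}\right)
\le \frac{1}{r^{q}}\left\|f-\int_{X} f\,d\mu\right\|_{L^{q}(\mu)}^{q}
\le \left(\frac{q}{C\,r^{2}}\right)^{q/2}.
\end{equation*}
Next, I would minimize the right-hand side over $q\in[1,\infty)$. Writing $\psi(q)=\bigl(q/(Cr^{2})\bigr)^{q/2}$, so that $\log\psi(q)=\tfrac{q}{2}\bigl(\log q-\log(Cr^{2})\bigr)$, a one-line differentiation gives the unconstrained critical point $q^{\ast}=Cr^{2}/e$, at which $\psi(q^{\ast})=\mathrm{e}^{-Cr^{2}/(2e)}$; moreover $\psi$ is decreasing on $(0,q^{\ast})$ and increasing on $(q^{\ast},\infty)$.

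The two regimes in the statement then arise naturally from whether $q^{\ast}$ lies inside the admissible interval $[1,\infty)$. When $r>\sqrt{e/C}$, one has $q^{\ast}>1$, so substituting $q=q^{\ast}$ is permitted and produces
\begin{equation*}
\mu\!\left(\left\{\left|f-\int_{X} f\,d\mu\right|\ge r\right\}\right)\le \mathrm{e}^{-Cr^{2}/(2e)},
\end{equation*}
which is the $1$-Lipschitz form of normal concentration (and, via Theorem \ref{cor-nor}, the $\alpha_{\mu}$ form). When $0<r<\sqrt{e/C}$, instead $q^{\ast}<1$, hence $\psi$ is increasing throughout $[1,\infty)$ and the constrained minimum is attained at $q=1$, giving
\begin{equation*}
\mu\!\left(\left\{\left|f-\int_{X} f\,d\mu\right|\ge r\right\}\right)\le \frac{1}{\sqrt{C}\,r},
\end{equation*}
which is exactly linear tail-decay in the sense of Definition \ref{tail} (with constant $\sqrt{C}$).

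The method is essentially routine and the only genuine choice is the localization of the optimum. I expect the main obstacle, such as it is, to be purely bookkeeping: verifying that the function $q\mapsto\psi(q)$ is monotone on $[1,\infty)$ in the small-$r$ regime so that $q=1$ is indeed the correct choice, and keeping track of the threshold value $\sqrt{e/C}$ so that the two regimes meet cleanly. No curvature, measure-theoretic, or Lipschitz-extension machinery is required beyond what has already been developed in Section \ref{con}.
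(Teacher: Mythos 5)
Your proposal is correct and follows essentially the same route as the paper: Chebyshev's inequality combined with the $(2,q)$-Moment bound gives $\mu(\{|f-\int_X f\,d\mu|\ge r\})\le r^{-q}(q/C)^{q/2}$, and minimizing over $q\ge 1$ yields the critical point $q^{\ast}=Cr^{2}/\mathrm{e}$ with value $\mathrm{e}^{-Cr^{2}/(2\mathrm{e})}$ in the regime $r>\sqrt{\mathrm{e}/C}$ and the boundary value $h(1)=1/(\sqrt{C}\,r)$ when $r<\sqrt{\mathrm{e}/C}$, after which Definition \ref{tail} and Theorem \ref{cor-nor} give the two conclusions, exactly as in the paper's argument.
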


\begin{proof}
By the Chebyshev inequality, for every $r>0$ and $q\geq 1$, we have
$$
\mu (\{ x\in X \mid |f(x)- \int_{X} f d\mu | \geq r \} )\leq r^{-q} \int_{X} |f(x)-\int_{X} f d\mu |^{q} d\mu \leq r^{-q} \left(\frac{q}{C}\right)^{q/2}.
$$
Let $h(q)=r^{-q} \left(\frac{q}{C}\right)^{q/2}$. Then  $\min\limits_{q\geq 1} h(q)=h(\frac{Cr^{2}}{{\rm e}})=\exp(-\frac{Cr^{2}}{2{\rm e}})$ when $r^{2}> \frac{{\rm e}}{C}$,  while $\min\limits_{q\geq 1} h(q)=h(1)= \left(\frac{1}{C}\right)^{1/2}r^{-1}$ when $r^{2}< \frac{{\rm e}}{C}$.
Thus, from Definition \ref{tail} and Theorem \ref{cor-nor}, we can derive our desired results respectively.
\end{proof}

Actually, by the Chebyshev inequality, we can also obtain the following result.
\begin{thm}\label{Moment-p1}
$(p, 1)$-Moment concentration implies linear tail-decay on an irreversible metric measure space  $(X, d, \mu)$.
\end{thm}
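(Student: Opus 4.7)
The plan is to derive the linear tail-decay bound directly from the $(p,1)$-Moment concentration hypothesis by a one-line application of the Markov (Chebyshev) inequality in $L^1(\mu)$. The $(p,1)$-Moment concentration (\ref{pqmo}) with $q=1$ reads
\[
\Bigl\|f-\int_{X} f\, d\mu\Bigr\|_{L^{1}(\mu)}^{p}\leq \frac{1}{C}
\]
for every 1-Lipschitz function $f$, which immediately yields the bound $\|f-\int_{X} f\, d\mu\|_{L^{1}(\mu)}\leq C^{-1/p}$.

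The key step is then Markov's inequality: for any $r>0$ and any 1-Lipschitz $f$,
\[
\mu\Bigl(\Bigl\{x\in X \,\Big|\, \Bigl|f(x)-\int_{X} f\, d\mu\Bigr|\geq r\Bigr\}\Bigr) \leq \frac{1}{r}\int_{X}\Bigl|f(x)-\int_{X} f\, d\mu\Bigr|\, d\mu \leq \frac{1}{C^{1/p}\, r}.
\]
Setting $\tilde{C}:=C^{1/p}$, this is exactly the linear tail-decay condition (\ref{Ltd}) in Definition \ref{tail}, with the constant $C$ replaced by $\tilde{C}$.

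There is no substantive obstacle here; the only subtlety worth remarking on is that the hypothesis is stated for the $L^1$-norm of the centered function, so one must recall that Markov's inequality applies to the non-negative function $|f-\int_X f\, d\mu|$ (rather than to $f$ itself), after which the bound follows by a direct substitution of the moment estimate. The proof thus consists of a single displayed computation and is essentially a restatement of Markov's inequality in the $q=1$ case, paralleling the argument already used in the proof of Theorem \ref{Moment-nol}.
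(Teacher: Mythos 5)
Your proposal is correct and follows essentially the same route as the paper: apply the $(p,1)$-Moment bound to get $\|f-\int_X f\,d\mu\|_{L^1(\mu)}\leq C^{-1/p}$ and then use Markov's (Chebyshev's) inequality to obtain the tail bound $\frac{1}{C^{1/p}r}$, which is linear tail-decay with constant $C^{1/p}$. Nothing further is needed.
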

\begin{proof} By the assumption, we have
$$
\|f-\int_{X} f d \mu \|_{L^{1}(\mu)}^{p} \leq \frac{1}{C}, \  \text{ i.e.,} \ \left(\int_{X} |f-\int_{X} f d \mu | d \mu \right)^{p} \leq \frac{1}{C}.
$$
By the Chebyshev inequality,
$$
\mu (\{x \in X \mid |f(x)-\int_{X} f d\mu | \geq r \} ) \leq r^{-1} \int_{X} |f-\int_{X} f d \mu | d \mu \leq r^{-1} \cdot\frac{1}{C^{\frac{1}{p}}}=\frac{1}{C^{\frac{1}{p}} r} .
$$
This completes the proof of the theorem.
\end{proof}

\vskip 2mm

By the similar discussions, we have the following results regarding exponential concentration.

\begin{thm}\label{alpha-ex}
The exponential concentration implies $(1, q)$-Moment concentration on an irreversible metric measure space  $(X, d, \mu)$.
\end{thm}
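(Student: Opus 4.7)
The plan is to mimic the proof of Theorem \ref{alpha-mean}, replacing the Gaussian decay with an exponential one throughout. First I would invoke Theorem \ref{cor-ex} to upgrade the hypothesis $\alpha_{(X,d,\mu)}(r)\leq C\mathrm{e}^{-cr}$ into a mean-deviation tail bound: for every 1-Lipschitz function $f$ on $(X,d,\mu)$,
$$
\mu\bigl(\{x\in X \mid |f(x)-\textstyle\int_X f\, d\mu|\geq r\}\bigr)\leq C'\mathrm{e}^{-c'r},
$$
where $C'=C'(C)$ and $c'=c'(c)$ are the constants produced by that theorem. This converts the hypothesis from a statement about enlargements of sets into the form convenient for integrating against $r^{q-1}$.

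Next, using the layer-cake representation exactly as in the proof of Theorem \ref{alpha-mean}, I would compute
$$
\|f-\textstyle\int_X f\, d\mu\|_{L^q(\mu)}^{q}
= \int_0^{\infty} q r^{q-1}\,\mu\bigl(\{x\in X \mid |f(x)-\textstyle\int_X f\, d\mu|> r\}\bigr)\, dr
\leq qC'\int_0^{\infty} r^{q-1}\mathrm{e}^{-c'r}\, dr.
$$
Evaluating the last integral by the definition of the Gamma function gives $(c')^{-q}\Gamma(q)$, so
$$
\|f-\textstyle\int_X f\, d\mu\|_{L^q(\mu)}^{q}\leq C'(c')^{-q}\,\Gamma(q+1).
$$
Taking $q$-th roots yields
$$
\|f-\textstyle\int_X f\, d\mu\|_{L^q(\mu)}\leq (C')^{1/q}(c')^{-1}\Gamma(q+1)^{1/q}.
$$

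To finish I need $\Gamma(q+1)^{1/q}\leq \tilde{c}\,q$ for some absolute $\tilde{c}$ valid on all $q\geq 1$. By Stirling's formula $\Gamma(q+1)\sim\sqrt{2\pi q}(q/\mathrm{e})^{q}$, so $\Gamma(q+1)^{1/q}\sim q/\mathrm{e}$ as $q\to\infty$; combined with the elementary bound on any bounded interval $[1,Q]$, this produces a uniform constant. Since $(C')^{1/q}\leq\max\{C',1\}$ for $q\geq 1$, I obtain
$$
\|f-\textstyle\int_X f\, d\mu\|_{L^q(\mu)}\leq \frac{q}{\tilde{C}}
$$
for some $\tilde{C}=\tilde{C}(C,c)>0$, which is precisely the $(1,q)$-Moment concentration inequality \eqref{pqmo} with $p=1$.

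The main obstacle is purely one of bookkeeping: keeping track of how the constants $C',c'$ from Theorem \ref{cor-ex} propagate through the Gamma-function estimate and confirming that the resulting bound on $\Gamma(q+1)^{1/q}$ is linear in $q$ uniformly over $q\geq 1$, not only asymptotically. No new geometric idea beyond the one used in Theorem \ref{alpha-mean} is required; the whole argument is a parallel computation with $\mathrm{e}^{-c'r^2}$ replaced by $\mathrm{e}^{-c'r}$.
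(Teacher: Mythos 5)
Your proposal is correct and is exactly the ``similar discussion'' the paper intends: the paper gives no separate proof of Theorem \ref{alpha-ex}, deferring to the argument of Theorem \ref{alpha-mean}, and your argument is precisely that parallel computation (Theorem \ref{cor-ex} for the mean-deviation tail, layer-cake, the Gamma integral $\int_0^\infty r^{q-1}{\rm e}^{-c'r}\,dr=(c')^{-q}\Gamma(q)$, and a Stirling-type bound $\Gamma(q+1)^{1/q}\leq \tilde{c}\,q$ uniformly for $q\geq 1$). The constant bookkeeping you flag is routine and goes through as you describe, yielding the $(1,q)$-Moment bound $\|f-\int_X f\,d\mu\|_{L^q(\mu)}\leq q/\tilde{C}$ with $\tilde{C}$ depending only on $C$ and $c$.
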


\begin{thm}\label{Moment-exl}
Let $(X, d, \mu)$ be an irreversible metric measure space. If $(X, d, \mu)$ satisfies $(1, q)$-Moment concentration  (\ref{pqmo}) for $q\geq 1$, then $(X, d, \mu)$ satisfies linear tail-decay when $0<r < \frac{\rm e}{C}$ and has exponential concentration when $r> \frac{\rm e}{C}$.
\end{thm}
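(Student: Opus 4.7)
The plan is to mirror the argument in Theorem \ref{Moment-nol}, replacing the exponent $2$ by $1$ throughout, and then optimize the resulting Chebyshev bound over $q\ge 1$. By the $(1,q)$-Moment concentration hypothesis, for every $1$-Lipschitz function $f$ we have $\|f-\int_X f\,d\mu\|_{L^q(\mu)}\le q/C$. Applying the Chebyshev inequality with exponent $q$, I would obtain, for any $r>0$ and any $q\ge 1$,
\[
\mu\!\left(\left\{x\in X \,\bigl|\, |f(x)-\textstyle\int_X f\,d\mu|\ge r\right\}\right)\le r^{-q}\!\int_X \bigl|f-\textstyle\int_X f\,d\mu\bigr|^{q}d\mu \le \left(\frac{q}{Cr}\right)^{\!q}.
\]

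Next I would choose $q$ to minimize the right-hand side. Writing $h(q):=\left(q/(Cr)\right)^q$, a direct logarithmic differentiation gives the unconstrained minimizer $q^{*}=Cr/e$. Since we must impose $q\ge 1$, two regimes appear. When $0<r<e/C$ the minimizer $q^{*}<1$, so $h$ is increasing on $[1,\infty)$ and the minimum over admissible $q$ is attained at $q=1$, yielding $h(1)=1/(Cr)$; this is exactly the linear tail-decay of Definition \ref{tail}. When $r>e/C$, the minimizer $q^{*}=Cr/e\ge 1$ is admissible, and substituting gives $h(q^{*})=e^{-Cr/e}$, i.e.\ an exponential tail bound
\[
\mu\!\left(\left\{x\in X \,\bigl|\, |f(x)-\textstyle\int_X f\,d\mu|\ge r\right\}\right)\le e^{-Cr/e}.
\]

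Finally, to upgrade the tail bound in the second regime to an exponential concentration statement about $\alpha_{(X,d,\mu)}$, I would invoke Theorem \ref{cor-ex} (the converse direction): a bound of the form $C'e^{-c'r}$ on the deviation of every $1$-Lipschitz function from its mean implies exponential concentration of $\mu$ with constants depending only on $C'$ and $c'$. Here $C'=1$ and $c'=C/e$, so exponential concentration follows for $r>e/C$.

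I do not expect any serious obstacle: the argument is essentially the $p=1$ analogue of Theorem \ref{Moment-nol}, and both ingredients—Chebyshev's inequality and the equivalence furnished by Theorem \ref{cor-ex}—are in hand. The only point that deserves a brief check is the case distinction at $r=e/C$, which is handled uniformly by the elementary minimization of $h(q)$ on $[1,\infty)$; and the reliance on Theorem \ref{cor-ex} to convert the pointwise tail estimate into a statement about the concentration function $\alpha_{\mu}$.
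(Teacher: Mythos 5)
Your argument is exactly the proof the paper intends: it is the $p=1$ analogue of the Chebyshev-plus-optimization argument used for Theorem \ref{Moment-nol} (the paper only says ``by the similar discussions''), with the case split at $q^{*}=Cr/e$ and the passage from the tail bound to the concentration function via Theorem \ref{cor-ex} matching the paper's use of Definition \ref{tail} and Theorem \ref{cor-nor} there. The proposal is correct and takes essentially the same route.
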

\vskip 2mm

Finsler metric measure manifolds form a special class of irreversible metric measure spaces.  Hence all discussions as above hold on Finsler metric measure manifolds. In particular, the following results corresponding respectively to Theorem {\ref{cor-nor}}, Theorem {\ref{cor-ex}}, Theorem {\ref{alpha-mean}} and Theorem {\ref{alpha-ex}} hold on Finsler metric measure manifolds.

\begin{thm}
Let $(M, F, \mathfrak{m})$ be a Finsler metric measure manifold with $\mathfrak{m}(M)=1$. Then $(M, F, \mathfrak{m})$ has normal concentration (\ref{normal}) if and only if for every 1-Lipschitz function $f$ on $(M, F, \mathfrak{m})$,
$$
\mathfrak{m}(\{x\in M \mid |f(x)- \int_{M} f d\mathfrak{m}|\geq r\})\leq C'{\rm e}^{-c'r^{2}},
$$
where $C'= C'(C)$ and $c'=c'(c)$ are constants.
\end{thm}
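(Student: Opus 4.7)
The plan is to deduce the theorem as a direct specialization of Theorem \ref{cor-nor} to the Finsler setting. The key observation, already highlighted in the paragraph immediately preceding the statement, is that a Finsler metric measure manifold $(M, F, \mathfrak{m})$ with $\mathfrak{m}(M)=1$ produces an irreversible metric measure space by taking $X=M$, $\mu=\mathfrak{m}$, and $d=d_F$, where $d_F$ is the (generally asymmetric) distance induced by the Finsler metric $F$ as recalled in Section \ref{pre}. Once this identification is made, the definition of the concentration function $\alpha_{(M, F, \mathfrak{m})}(r)$ coincides verbatim with $\alpha_{(X, d, \mu)}(r)$ in (\ref{confun}), because the forward and backward balls $B_{x_0}^{+}(r)$, $B_{x_0}^{-}(r)$ of the Finsler manifold agree with the forward and backward neighborhoods used to define $\alpha_{\mu}$.

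Next I would verify that the class of $1$-Lipschitz functions used in the Finsler formulation matches the one used for irreversible metric measure spaces: a function $f \colon M \to \mathbb{R}$ is $1$-Lipschitz on $(M, F, \mathfrak{m})$ exactly when $f(z)-f(x)\leq d_F(x,z)$ for all $x, z \in M$, which is precisely the Lipschitz condition in the abstract definition with $L=1$. No additional smoothness or regularity is needed at this stage, so both directions of the equivalence in Theorem \ref{cor-nor} transfer without modification.

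For the forward implication, I would assume the normal concentration estimate $\alpha_{(M, F, \mathfrak{m})}(r) \leq C e^{-cr^2}$ and run the argument of Theorem \ref{cor-nor}: inequality (\ref{mf3}) gives a $2C e^{-cr^2}$ control around the median, and then Proposition \ref{f-mean}(2) with $p=2$, $\beta(r)=2Ce^{-cr^2}$ upgrades the estimate to one around the mean with explicit constants $C' = \max\{2C,1\}\,e^{4\Gamma(3/2)^2 C^2}$ and $c' = c/2$. For the converse, given the hypothesized bound around the mean with constants $C'$, $c'$, Proposition \ref{f-mean}(1) applied with $\beta(r) = C' e^{-c' r^2}$ produces $\alpha_{\mathfrak{m}}(r) \leq C' e^{-c' r^2 / 4}$, that is, normal concentration with constants $C = C'$ and $c = c'/4$.

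The only subtlety worth being careful about is the asymmetric nature of $d_F$: in Proposition \ref{f-mean}(1) the two separate test functions $x \mapsto \min\{d_F(A,x), r\}$ and $x \mapsto \max\{-d_F(x,A), -r\}$ are required to bound $1-\mathfrak{m}(B^{+}(A,r))$ and $1-\mathfrak{m}(B^{-}(A,r))$ respectively. Since this asymmetry was already handled in the abstract framework, there is no genuine obstacle here; the Finsler result is purely a translation, and I would not expect any additional step beyond citing Theorem \ref{cor-nor} after the identification above.
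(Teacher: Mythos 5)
Your proposal is correct and matches the paper's treatment: the paper derives this theorem by observing that $(M, d_F, \mathfrak{m})$ is a special case of an irreversible metric measure space and simply specializing Theorem \ref{cor-nor}, which is exactly your argument (with the constants carried over from Proposition \ref{f-mean}). No gaps.
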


\begin{thm}
Let $(M, F, \mathfrak{m})$ be a Finsler metric measure manifold with $\mathfrak{m}(M)=1$. Then $(M, F, \mathfrak{m})$ has exponential concentration (\ref{exp}) if and only if for every 1-Lipschitz function $f$ on $(M, F, \mathfrak{m})$,
$$
\mathfrak{m}(\{x\in M \mid |f(x)- \int_{M} f d\mathfrak{m}|\geq r\})\leq C'{\rm e}^{-c'r},
$$
where $C'= C'(C)$ and $c'=c'(c)$ are constants.
\end{thm}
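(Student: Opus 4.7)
The plan is to show that this theorem is essentially a direct specialization of Theorem \ref{cor-ex} to the Finsler setting, since a Finsler metric measure manifold $(M, F, \mathfrak{m})$ with $\mathfrak{m}(M) = 1$ is a particular kind of irreversible metric measure space. Indeed, the induced distance $d_{F}$ on $M$ from Section \ref{pre} satisfies axioms (i) and (ii) of an irreversible metric (possibly without the symmetry condition (iii) when $F$ is not reversible), and $\mathfrak{m}$ is a Borel probability measure. Therefore I would instantiate the triple $(X, d, \mu) = (M, d_{F}, \mathfrak{m})$ in the statement of Theorem \ref{cor-ex} and read off the conclusion.

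For the forward direction, assume $\alpha_{(M,F,\mathfrak{m})}(r) \leq C e^{-cr}$. By the general estimate (\ref{mf3}) (which was derived in the irreversible setting and thus holds for $d_F$), for any $1$-Lipschitz function $f$ on $(M,F)$ and any $r>0$, $\mathfrak{m}\bigl(\{x\in M \mid |f(x)-m_f|\geq r\}\bigr) \leq 2\alpha_{\mathfrak{m}}(r) \leq 2C e^{-cr}$. Apply Proposition \ref{f-mean}(2) with $p=1$ and $\beta(r) = 2C e^{-cr}$: this produces
$$
\mathfrak{m}\bigl(\{x\in M \mid |f(x)-\textstyle\int_M f\, d\mathfrak{m}|\geq r\}\bigr) \leq C' e^{-c' r},
$$
with explicit $c' = c$ (since $\kappa_1 = 1$) and $C' = \max\{2C,1\}\, e^{4C^2}$ (using $\mathfrak{c}_1 = \Gamma(2) = 1$ after the appropriate substitution).

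For the converse direction, suppose the $1$-Lipschitz inequality with constants $C'$, $c'$ holds. Apply Proposition \ref{f-mean}(1) with the decreasing function $\beta(r) = C' e^{-c' r}$ to conclude $\alpha_{\mathfrak{m}}(r) \leq \beta(r/2) = C' e^{-c' r/2}$, which is the exponential concentration (\ref{exp}) with $C = C'$ and $c = c'/2$.

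Since the entire argument is a transcription of Theorem \ref{cor-ex} to the Finsler setting, there is no genuine obstacle; the only subtlety to verify is that Proposition \ref{f-mean} continues to apply, which in turn amounts to checking that the functions $f(x) = \min\{d_F(A,x), r\}$ and $f(x) = \max\{-d_F(x,A), -r\}$ used in its proof are $1$-Lipschitz with respect to $d_F$ on a Finsler manifold. This follows immediately from the (asymmetric) triangle inequality for $d_F$, so no additional work is required.
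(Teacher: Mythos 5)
Your proof is correct and takes exactly the paper's route: the paper obtains this theorem by viewing $(M, d_{F}, \mathfrak{m})$ as a special irreversible metric measure space and specializing Theorem \ref{cor-ex}, which in turn is proved from (\ref{mf3}) and Proposition \ref{f-mean} with $p=1$ (forward direction) and Proposition \ref{f-mean}(1) (converse), just as you do. The only blemish is the explicit constant in the forward direction: with $p=1$ and $\beta(r)=2C{\rm e}^{-cr}$, Proposition \ref{f-mean}(2) yields $C'=\max\{2C,1\}{\rm e}^{2C}$ rather than $\max\{2C,1\}{\rm e}^{4C^{2}}$, a harmless slip since the statement only requires $C'=C'(C)$.
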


\begin{thm}
The normal concentration implies $(2, q)$-Moment concentration on a Finsler metric measure manifold  $(M, F, \mathfrak{m})$ with $\mathfrak{m}(M)=1$.
\end{thm}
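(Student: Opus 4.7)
The plan is to deduce this statement directly from Theorem \ref{alpha-mean} by observing that a Finsler metric measure manifold with unit total mass is a particular instance of an irreversible metric measure probability space. The whole argument is essentially a bookkeeping step, as the author already indicates in the paragraph preceding the statement.

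First I would identify $(M, F, \mathfrak{m})$ with the triple $(M, d_F, \mathfrak{m})$, where $d_F$ is the (possibly asymmetric) distance function induced by $F$ as described in Section \ref{pre}. Properties (1) and (2) of $d_F$ recorded there are exactly conditions (i) and (ii) in the definition of an irreversible metric space, and the hypothesis $\mathfrak{m}(M)=1$ makes $\mathfrak{m}$ a Borel probability measure on $M$. Consequently $(M, d_F, \mathfrak{m})$ falls within the framework of Section \ref{con}; in particular the concentration function $\alpha_{(M, d_F, \mathfrak{m})}$ coincides with the Finsler concentration function, and a $1$-Lipschitz function on the Finsler metric measure manifold is by definition a $1$-Lipschitz function in the sense of the irreversible metric $d_F$.

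With these identifications in place, the normal concentration assumption (\ref{normal}) for $(M, F, \mathfrak{m})$ is literally the same as the assumption in Theorem \ref{alpha-mean} applied to $(M, d_F, \mathfrak{m})$. Invoking that theorem then yields, for every $1$-Lipschitz function $f$ on $M$,
$$
\Big\| f - \int_M f\, d\mathfrak{m} \Big\|_{L^q(\mathfrak{m})}^{2} \leq \frac{q}{C},
$$
which is precisely the $(2, q)$-Moment concentration property. The constant $C$ here is the one produced through the chain Proposition \ref{f-mean} $\Rightarrow$ Theorem \ref{cor-nor} $\Rightarrow$ Theorem \ref{alpha-mean}, and depends only on the constants $C$ and $c$ appearing in the normal concentration bound for $\mathfrak{m}$.

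The only point that might require a moment's care, and thus is the closest thing to an obstacle, is verifying that the asymmetry of $d_F$ does not interfere with either the Lipschitz condition (which is a one-sided inequality in the definition recalled from \cite{KZ}) or with the definition of the concentration function via forward and backward neighborhoods; however, Proposition \ref{f-mean} and Theorem \ref{alpha-mean} were formulated in exactly this asymmetric generality, so no additional argument is required in the Finsler specialization.
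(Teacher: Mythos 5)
Your proposal is correct and matches the paper exactly: the paper gives no separate proof for this statement, deducing it (as you do) from Theorem \ref{alpha-mean} by noting that a Finsler metric measure manifold with $\mathfrak{m}(M)=1$, equipped with the induced distance $d_F$, is a special case of an irreversible metric measure space. Your extra check that the asymmetry of $d_F$ causes no trouble is a reasonable bookkeeping remark consistent with the paper's framework.
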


\begin{thm}
The exponential concentration implies $(1, q)$-Moment concentration on a Finsler metric measure manifold  $(M, F, \mathfrak{m})$ with $\mathfrak{m}(M)=1$.
\end{thm}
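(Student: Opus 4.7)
The proof proceeds in close parallel to Theorem \ref{alpha-mean}, with the Gaussian tail replaced by an exponential one; the key change is that the relevant Gamma-function estimate now yields a linear growth in $q$ rather than $\sqrt{q}$. First I would invoke the Finsler analogue of Theorem \ref{cor-ex} (the immediately preceding theorem in the paper), which converts the hypothesis $\alpha_{(M,F,\mathfrak{m})}(r)\leq C e^{-cr}$ into the deviation estimate
\[
\mathfrak{m}\left(\left\{x\in M \mid |f(x)-\textstyle\int_M f\, d\mathfrak{m}|\geq r\right\}\right)\leq C' e^{-c' r}
\]
for every $1$-Lipschitz function $f$ on $(M,F,\mathfrak{m})$, with $C'=C'(C)$ and $c'=c'(c)$.

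Next I would apply the layer-cake identity together with Fubini's theorem, exactly as in the proof of Theorem \ref{alpha-mean}, to write
\[
\left\|f-\textstyle\int_M f\, d\mathfrak{m}\right\|_{L^q(\mathfrak{m})}^q
= \int_0^\infty q\, r^{q-1}\,\mathfrak{m}\!\left(\left\{x\in M\mid |f(x)-\textstyle\int_M f\, d\mathfrak{m}|>r\right\}\right) dr.
\]
Inserting the exponential tail bound and evaluating the resulting integral via the substitution $u = c'r$ gives
\[
\left\|f-\textstyle\int_M f\, d\mathfrak{m}\right\|_{L^q(\mathfrak{m})}^q
\leq q C' \int_0^\infty r^{q-1} e^{-c' r}\, dr
= C'\,(c')^{-q}\,\Gamma(q+1).
\]

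Taking $q$-th roots yields
\[
\left\|f-\textstyle\int_M f\, d\mathfrak{m}\right\|_{L^q(\mathfrak{m})}
\leq (C')^{1/q}\,(c')^{-1}\,\Gamma(q+1)^{1/q}.
\]
The final step — which is the only substantive obstacle — is to bound $\Gamma(q+1)^{1/q}$ linearly in $q$, uniformly for $q\geq 1$. By the Stirling asymptotic $\Gamma(q+1)\sim\sqrt{2\pi q}\,(q/e)^q$, one has $\Gamma(q+1)^{1/q}=(2\pi q)^{1/(2q)}\,q/e\cdot(1+o(1))$; since $(2\pi q)^{1/(2q)}$ is bounded on $[1,\infty)$ (it tends to $1$ and attains its maximum at a finite point), one obtains $\Gamma(q+1)^{1/q}\leq K_0\, q$ for a universal constant $K_0$, and similarly $(C')^{1/q}$ is uniformly bounded by $\max\{C',1\}$. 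Assembling these estimates delivers
\[
\left\|f-\textstyle\int_M f\, d\mathfrak{m}\right\|_{L^q(\mathfrak{m})}\leq \frac{q}{C}
\]
for a suitable positive constant $C = C(C',c')$, which is precisely the $(1,q)$-Moment concentration. The argument is genuinely a specialization of Theorem \ref{alpha-ex} to the Finsler setting, since Finsler metric measure manifolds are a particular class of irreversible metric measure spaces.
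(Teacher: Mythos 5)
Your proposal is correct and takes essentially the same route as the paper: the paper obtains the Finsler statement simply by specializing Theorem \ref{alpha-ex} (Finsler metric measure manifolds being a particular class of irreversible metric measure spaces), and Theorem \ref{alpha-ex} itself is proved ``by the similar discussions'' as Theorem \ref{alpha-mean}, i.e.\ exactly your chain of the deviation bound from Theorem \ref{cor-ex}, the layer-cake/Fubini identity, the Gamma-function evaluation of the exponential-tail integral, and a Stirling-type bound giving $\Gamma(q+1)^{1/q}\leq K_0\, q$ uniformly for $q\geq 1$.
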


In \cite{EMil},  E. Milman proved the equivalence of the First-Moment concentration inequality, the exponential concentration inequality, Cheeger's isoperimetric inequality and Poincar\'{e}'s inequality on Riemannian manifolds under the convexity assumptions.
It is an important and interesting topic to consider such  equivalence on Finsler metric measure manifolds.

\section{Observable diameter and concentration}\label{diacon}

In 1999, M. Gromov \cite{GROMOV} introduced the notion of observable diameter on reversible metric measure spaces. Later, Z. Shen \cite{SHEN} introduced the notion of  observable diameter on Finsler metric measure manifolds.
In this section, following the definition in \cite{SHEN}, we introduce the observable diameter ${\rm ObsDiam}_{\mu}(X,\kappa)$ on irreversible metric measure spaces $(X, d, \mu)$. Concretely, for $0<\kappa<1$, define first the partial diameter
\[
{\rm ParDiam}_{\mu}(X; \kappa):=\inf\limits_{A}\{{\rm diam}(A) \mid A\subset X, \ \mu(A)\geq 1-\kappa\}, \label{fpdia}
\]
where ${\rm diam}(A):=\sup_{x, z\in A}d(x, z)$. Then we define the observable diameter as
\beq
{\rm ObsDiam}_{\mu}(X; \kappa)&:=& \sup\limits_{f\in {\rm Lip}_{1}(X)}{\rm ParDiam}_{f_{*}\mu}(\mathbb{R}; \kappa) \nonumber\\
&=& \sup\limits_{f\in {\rm Lip}_{1}(X)} \inf\limits_{I}\{{\rm diam}(I) \mid I\subset \mathbb{R}, \ \mu(f^{-1}(I))\geq 1-\kappa\}, \label{obdia}
\eeq
where $f_{*}\mu$ is the push-forward of $\mu$ by the function $f$.

The following theorem establishes the relationship between the observable diameter ${\rm ObsDiam}_{\mu}(X; \kappa)$ and the concentration function $\alpha_{\mu}(r)=\alpha_{(X, d, \mu)}(r)$. Let $\alpha_{\mu}^{-1}(\varepsilon)$ be the generalized inverse function of the non-increasing function $\alpha_{\mu}(r)$, that is,
\be
\alpha_{\mu}^{-1}(\varepsilon):=\inf \{r>0 \mid \alpha_{\mu}(r)\leq \varepsilon\} \label{inverse}
\ee
for $\varepsilon>0.$

\begin{thm}\label{osb-alpha}
Let $(X, d, \mu)$ be an irreversible metric measure space. Then for every $0<\varepsilon<1$,
\be
{\rm ObsDiam}_{\mu}(X; \varepsilon)\leq 2\alpha_{\mu}^{-1}\left(\frac{\varepsilon}{2}\right).  \label{esObs}
\ee
\end{thm}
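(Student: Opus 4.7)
The plan is to reduce the claim to the standard deviation inequality for $1$-Lipschitz functions around a median, which is already recorded in the discussion preceding Proposition \ref{f-mean}, and then to unpack the two definitions in play. The key input is inequality (\ref{mf3}) with $L=1$: for any $1$-Lipschitz $f$ and any $r>0$,
\[
\mu(\{x\in X \mid |f(x)-m_f|\geq r\})\leq 2\alpha_\mu(r),
\]
where $m_f$ is a median of $f$ for $\mu$.

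Before applying this bound I would clean up the generalized inverse. Since $\alpha_\mu$ is non-increasing, for every $r>\alpha_\mu^{-1}(\varepsilon/2)$ the definition (\ref{inverse}) furnishes some $s\in (0,r]$ with $\alpha_\mu(s)\leq \varepsilon/2$, so by monotonicity $\alpha_\mu(r)\leq \varepsilon/2$. Combined with the displayed inequality, this gives, for any $1$-Lipschitz $f$ and any such $r$,
\[
\mu(\{x\in X \mid |f(x)-m_f|\geq r\})\leq 2\alpha_\mu(r)\leq \varepsilon.
\]
Taking the interval $I:=(m_f-r,\,m_f+r)\subset \mathbb{R}$, which has ${\rm diam}(I)=2r$, one has $f^{-1}(I)\supseteq \{x\mid |f(x)-m_f|<r\}$ and hence $\mu(f^{-1}(I))\geq 1-\varepsilon$. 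By the definition of the partial diameter of the pushforward $f_*\mu$, this forces ${\rm ParDiam}_{f_*\mu}(\mathbb{R};\varepsilon)\leq 2r$.

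Letting $r\downarrow \alpha_\mu^{-1}(\varepsilon/2)$ yields ${\rm ParDiam}_{f_*\mu}(\mathbb{R};\varepsilon)\leq 2\alpha_\mu^{-1}(\varepsilon/2)$, and taking the supremum over all $1$-Lipschitz $f$ in the definition (\ref{obdia}) of the observable diameter delivers (\ref{esObs}). The argument is short and has no real obstacle; the only mildly delicate point is the handling of the generalized inverse, i.e.\ whether the infimum in (\ref{inverse}) is attained, and this is dispatched cleanly by the monotonicity of $\alpha_\mu$ as indicated.
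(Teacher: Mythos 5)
Your proposal is correct and follows essentially the same route as the paper: apply the median-deviation bound (\ref{mf3}) with $L=1$, conclude ${\rm ParDiam}_{f_*\mu}(\mathbb{R};\varepsilon)\leq 2r$ whenever $\alpha_\mu(r)\leq\varepsilon/2$, and pass to the infimum in $r$ and the supremum over $1$-Lipschitz $f$. Your extra care with the generalized inverse (using monotonicity of $\alpha_\mu$ to get $\alpha_\mu(r)\leq\varepsilon/2$ for all $r>\alpha_\mu^{-1}(\varepsilon/2)$ and then letting $r$ decrease to the inverse) is just a slightly more explicit version of the paper's step of taking the infimum over admissible $r$.
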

\begin{proof}
For any $f\in {\rm Lip}_{1}(X)$, we have from inequality (\ref{mf3}) that for every $r>0$,
$$
\mu(\{x\in X \mid |f(x)- m_{f}|\geq r\})\leq 2\alpha_{\mu}(r),
$$
and then,
\[
 \mu(\{x\in X \mid f(x)\in(m_{f}-r, m_{f}+r)\})\geq 1-2\alpha_{\mu}(r).
\]
Further, for any $r>0$ satisfying $\alpha_{\mu}(r)\leq \frac{\varepsilon}{2}$, we have
$$
\mu(\{x\in X \mid f(x)\in(m_{f}-r, m_{f}+r)\})\geq 1-\varepsilon.
$$
Because the interval $[m_{f}-r, m_{f}+r]$ has length $2r$, from the definition of ${\rm ParDiam}_{f_{*}\mu}(\mathbb{R}; \varepsilon)$, we have
$$
{\rm ParDiam}_{f_{*}\mu}(\mathbb{R}; \varepsilon)\leq 2r.
$$
From this,  we first take the infimum over all $r>0$ satisfying $\alpha_{\mu}(r)\leq \frac{\varepsilon}{2}$,  and then, by using (\ref{inverse}) we can obtain that
$$
{\rm ParDiam}_{f_{*}\mu}(\mathbb{R}; \varepsilon)\leq 2\alpha_{\mu}^{-1}\left(\frac{\varepsilon}{2}\right).
$$
Thus, by the definition of ${\rm ObsDiam}_{\mu}(X; \varepsilon)$ and taking the supremum over all 1-Lipchisz function $f$ in the above inequality,  we can conclude the following
$$
{\rm ObsDiam}_{\mu}(X; \varepsilon)\leq 2\alpha_{\mu}^{-1}\left(\frac{\varepsilon}{2}\right).
$$
This completes the proof.
\end{proof}

Note that the inverse function $\alpha_{\mu}^{-1}(\varepsilon)$ is closely related to the expansion distance ${\rm ExDist(M, \varepsilon)}$ defined by Z. Shen \cite{SHEN} in Finsler setting. Hence (\ref{esObs}) is an analogue of  inequality (4.29) of \cite{SHEN} on Finsler metric measure manifolds $(M, F, \mathfrak{m})$ with $\mathfrak{m}(M)=1$.

According to the Theorem \ref{osb-alpha}, if $(X, d, \mu)$ has normal concentration (\ref{normal}), then  for $0<\varepsilon<1$,
\be
{\rm ObsDiam}_{\mu}(X; \varepsilon)\leq 2\sqrt{\frac{1}{c}\log\frac{2C}{\varepsilon}}.\label{Ob-nor}
\ee
If $(X, d, \mu)$ has exponential concentration (\ref{exp}), then for $0<\varepsilon<1$,
\be
{\rm ObsDiam}_{\mu}(X; \varepsilon)\leq \frac{2}{c}\log\frac{2C}{\varepsilon}.\label{Ob-ex}
\ee

\section{Isoperimetric inequality and concentration}\label{Iso}
Let $(M, F, \mathfrak{m})$ be an $n$-dimensional Finsler manifold $(M, F)$ equipped with a positive smooth measure $\mathfrak{m}$. Given a subset $E\subset M$, the forward and backward Minkowski exterior boundary measure (or Minkowski content) are respectively defined as
\be
\mathfrak{m}^{+}(E):=\liminf\limits_{\varepsilon\rightarrow 0^{+}} \frac{\mathfrak{m}(B^{+}(E, \varepsilon))-\mathfrak{m}(E)}{\varepsilon} \ \ \text{and} \ \ \mathfrak{m}^{-}(E):=\liminf\limits_{\varepsilon\rightarrow 0^{+}} \frac{\mathfrak{m}(B^{-}(E, \varepsilon))-\mathfrak{m}(E)}{\varepsilon}. \label{Mcontent}
\ee
The isoperimetric profile $I_{\mathfrak{m}}: [0, \mathfrak{m}(M)]\rightarrow [0, \infty)$ of $(M, F, \mathfrak{m})$ is defined by
\be
I_{\mathfrak{m}}(\mathfrak{m}(E)):=\inf\{\min\{\mathfrak{m}^{+}(E), \mathfrak{m}^{-}(E)\} \ | \ E\subset M: \text{Borel set with \ } \mathfrak{m}^{+}(E)<\infty \ \text{and} \ \mathfrak{m}^{-}(E)<\infty \}. \label{isopro}
\ee

In the following, similar to the arguments in Chapter 2 of \cite{Le},  we always regard $\liminf$ in (\ref{Mcontent}) as the limit for Borel subsets.
\begin{lem}\label{iso-I}
Let $(M, F, \mathfrak{m})$ be an $n$-dimensional Finsler metric measure manifold. Assume that $I_{\mathfrak{m}}\geq \varphi'\circ \varphi^{-1}$ for some strictly increasing differentiable function $\varphi: I\subset \mathbb{R}\rightarrow[0, \mathfrak{m}(M)]$. Then, for every $r>0$,
$$
\min\left\{\varphi^{-1}\left(\mathfrak{m}(B^{+}(E, r))\right), \varphi^{-1}\left(\mathfrak{m}(B^{-}(E, r))\right)\right\}\geq \varphi^{-1}\left(\mathfrak{m}(E)\right)+r.
$$
\end{lem}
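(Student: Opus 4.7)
The strategy is the classical Gromov--Milman isoperimetric-to-concentration argument, adapted to the asymmetric Finsler setting. Fix a Borel set $E\subset M$ and define $u(r):=\mathfrak{m}(B^{+}(E,r))$, which is nondecreasing in $r$. The triangle inequality for $d_{F}$ gives $B^{+}(B^{+}(E,r),\varepsilon)\subset B^{+}(E,r+\varepsilon)$, so
$$
\liminf_{\varepsilon\to 0^{+}}\frac{u(r+\varepsilon)-u(r)}{\varepsilon}\ \geq\ \liminf_{\varepsilon\to 0^{+}}\frac{\mathfrak{m}(B^{+}(B^{+}(E,r),\varepsilon))-\mathfrak{m}(B^{+}(E,r))}{\varepsilon}\ =\ \mathfrak{m}^{+}(B^{+}(E,r)).
$$
Combining this with the definition of the isoperimetric profile $I_{\mathfrak{m}}$ and the hypothesis $I_{\mathfrak{m}}\geq \varphi'\circ\varphi^{-1}$ yields the pointwise differential inequality $\liminf_{\varepsilon\to 0^{+}}\varepsilon^{-1}(u(r+\varepsilon)-u(r))\geq \varphi'(\varphi^{-1}(u(r)))$.

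Next set $W(r):=\varphi^{-1}(u(r))$; since $\varphi$ is strictly increasing and differentiable with $\varphi'>0$, $\varphi^{-1}$ is differentiable on the range of $\varphi$. At every $r$ where $u$ is classically differentiable, the liminf above is $u'(r)$, so by the chain rule
$$
W'(r)\ =\ \frac{u'(r)}{\varphi'(\varphi^{-1}(u(r)))}\ \geq\ 1 \quad \text{for a.e.\ } r.
$$
Because $W$ is nondecreasing, Lebesgue's theorem on monotone functions gives $W(r)\geq W(0)+\int_{0}^{r}W'(s)\,ds\geq W(0)+r$, which is exactly $\varphi^{-1}(\mathfrak{m}(B^{+}(E,r)))\geq \varphi^{-1}(\mathfrak{m}(E))+r$.

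The backward case is handled by the mirror inclusion $B^{-}(B^{-}(E,r),\varepsilon)\subset B^{-}(E,r+\varepsilon)$, which, by the same computation applied to $v(r):=\mathfrak{m}(B^{-}(E,r))$ and the definition of $\mathfrak{m}^{-}$, yields $\varphi^{-1}(\mathfrak{m}(B^{-}(E,r)))\geq \varphi^{-1}(\mathfrak{m}(E))+r$. Taking the minimum of the two inequalities produces the claim.

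The main technical obstacle is the passage from the a.e.\ derivative bound to the integrated inequality: since $u$ is only assumed monotone, it need not be absolutely continuous and could carry a singular increasing part. The point is that this part only \emph{adds} to $W$, so $W(r)\geq W(0)+\int_{0}^{r}W'\,ds\geq W(0)+r$ still holds with the right sign. A secondary bookkeeping issue is ensuring $u(r)$ lies in the range of $\varphi$ so that $W(r)$ is defined; this is handled by restricting to $r$ with $u(r)<\mathfrak{m}(M)$ and a limit argument at the boundary (compare Chapter~2 of \cite{Le}).
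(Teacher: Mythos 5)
Your proof follows essentially the same route as the paper's: set $h(r)=\varphi^{-1}(\mathfrak{m}(B^{\pm}(E,r)))$, use the inclusion $B^{+}(B^{+}(E,r),\varepsilon)\subset B^{+}(E,r+\varepsilon)$ (and its backward analogue) together with the hypothesis $I_{\mathfrak{m}}\geq\varphi'\circ\varphi^{-1}$ to get $h'\geq 1$, and integrate. In fact you are somewhat more careful than the paper, which writes $h(r)=h(0)+\int_{0}^{r}h'(s)\,ds$ as an equality, whereas you correctly observe that for a merely nondecreasing $h$ only the one-sided inequality $h(r)\geq h(0)+\int_{0}^{r}h'(s)\,ds$ from Lebesgue's theorem is available --- which is exactly the direction needed.
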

\begin{proof}
Let $h(r)=\varphi^{-1}(\mathfrak{m}(B^{+}(E, r)))$, then by the hypothesis and the definition of the isoperimetric profile $I_{\mathfrak{m}}$, we have that
$$
h'(r)=\frac{\mathfrak{m}^{+}(B^{+}(E, r))}{\varphi'\circ \varphi^{-1}(\mathfrak{m}(B^{+}(E, r)))} \geq 1.
$$
Thus $h(r)=h(0)+\int_{0}^{r}h'(s)ds\geq \varphi^{-1}(\mathfrak{m}(E))+r$, that is, $\varphi^{-1}(\mathfrak{m}(B^{+}(E, r))) \geq \varphi^{-1}(\mathfrak{m}(E))+r$.

Similarly, for the backward Minkowski content we can also obtain that $\varphi^{-1}(\mathfrak{m}(B^{-}(E, r)))\geq \varphi^{-1}(\mathfrak{m}(E))+r$. Thus, our desired result holds.
\end{proof}

\begin{lem}\label{iso-phi}
Let $(M, F, \mathfrak{m})$ be an $n$-dimensional Finsler metric measure manifold with $\mathfrak{m}(M)=1$. Assume that $I_{\mathfrak{m}}\geq \varphi'\circ \varphi^{-1}$ for some strictly increasing differentiable function $\varphi: I\subset \mathbb{R}\rightarrow[0, 1]$. Then, for every $r>0$,
$$
\alpha_{(M, F, \mathfrak{m})}(r)\leq 1-\varphi(\varphi^{-1}(1/2)+r).
$$
\end{lem}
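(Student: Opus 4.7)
The plan is to deduce this lemma essentially as an immediate corollary of Lemma \ref{iso-I} together with the definition (\ref{confun}) of the concentration function. The key observation is that the bound in Lemma \ref{iso-I} is uniform in the initial measure $\mathfrak{m}(E)$, so by restricting to sets $E = A$ with $\mathfrak{m}(A) \geq 1/2$ and using monotonicity of $\varphi^{-1}$, we can replace the right-hand side by the constant $\varphi^{-1}(1/2) + r$.

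Concretely, I would proceed as follows. Fix $r>0$ and let $A \subset M$ be any Borel set with $\mathfrak{m}(A) \geq 1/2$. By Lemma \ref{iso-I} applied to $E = A$,
\[
\min\bigl\{\varphi^{-1}(\mathfrak{m}(B^{+}(A,r))),\ \varphi^{-1}(\mathfrak{m}(B^{-}(A,r)))\bigr\} \;\geq\; \varphi^{-1}(\mathfrak{m}(A)) + r.
\]
Since $\varphi$ is strictly increasing, so is $\varphi^{-1}$, and hence $\varphi^{-1}(\mathfrak{m}(A)) \geq \varphi^{-1}(1/2)$. Substituting and then applying $\varphi$ (which is also increasing) to both sides gives
\[
\min\bigl\{\mathfrak{m}(B^{+}(A,r)),\ \mathfrak{m}(B^{-}(A,r))\bigr\} \;\geq\; \varphi\bigl(\varphi^{-1}(1/2) + r\bigr).
\]
Rearranging, $1 - \min\{\mathfrak{m}(B^{+}(A,r)), \mathfrak{m}(B^{-}(A,r))\} \leq 1 - \varphi(\varphi^{-1}(1/2) + r)$.

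Finally, taking the supremum over all Borel sets $A$ with $\mathfrak{m}(A) \geq 1/2$, the definition (\ref{confun}) of $\alpha_{(M, F, \mathfrak{m})}(r)$ yields the desired estimate. There is no genuine obstacle here: the only point that requires a little care is ensuring that $\varphi^{-1}(1/2)$ lies in the interior of $I$ so that $\varphi^{-1}(1/2) + r$ remains in the domain of $\varphi$ for the values of $r$ under consideration; this is guaranteed by the hypothesis that $\varphi$ maps onto $[0,1]$ (so that $\varphi^{-1}(1/2)$ is well-defined) together with the monotone extension of $\varphi$ to arbitrarily large arguments (where $\varphi$ saturates at $1$ making the bound trivial). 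All other steps are direct applications of monotonicity together with Lemma \ref{iso-I}.
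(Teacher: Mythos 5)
Your proposal is correct and follows essentially the same route as the paper: apply Lemma \ref{iso-I} to a Borel set $A$ with $\mathfrak{m}(A)\geq 1/2$, use monotonicity of $\varphi$ and $\varphi^{-1}$ to bound $\min\{\mathfrak{m}(B^{+}(A,r)),\mathfrak{m}(B^{-}(A,r))\}$ below by $\varphi(\varphi^{-1}(1/2)+r)$, and conclude from the definition of the concentration function. Your extra remark about the domain of $\varphi$ is a harmless refinement that the paper leaves implicit.
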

\begin{proof}
Since $\varphi$ is a strictly increasing function, by Lemma \ref{iso-I} we have
$$
\min\{\mathfrak{m}(B^{+}(E, r)), \mathfrak{m}(B^{-}(E, r))\}\geq \varphi\left(\varphi^{-1}(\mathfrak{m}(E))+r\right).
$$
For any $E\subset M$ satisfying $\mathfrak{m}(E)\geq \frac{1}{2}$, it is not difficult to see that  $\varphi^{-1}(\mathfrak{m}(E))\geq \varphi^{-1}(1/2)$ and $
\alpha_{(M, F, \mathfrak{m})}(r)\leq 1-\varphi(\varphi^{-1}(1/2)+r)$ by the definition of the concentration function $\alpha_{(M, F, \mathfrak{m})}(r)$.
\end{proof}

In \cite{OHTA}, S. Ohta  proved a Bakry-Ledoux type isoperimetric inequality on compact Finsler metric measure manifolds satisfying $\mathfrak{m}(M)=1$ and ${\rm Ric}_{\infty}\geq K>0$ (\cite{OHTA}, Theorem 15.1. Also see \cite{ohtabl}). Although the isoperimetric profile $I_{\mathfrak{m}}(\mathfrak{m}(E))$ in this paper is slightly different from the isoperimetric profile $I_{(M, F, \mathfrak{m})}(\mathfrak{m}(E))$ defined by S. Ohta in \cite{OHTA}, we can prove the following lemma by following Ohta's argument.
\begin{lem} \label{O-iso}
Assume that $(M, F, \mathfrak{m})$ is compact and satisfies $\mathfrak{m}(M)=1$ and ${\rm Ric}_{\infty}\geq K>0$. Then we have
$$
I_{\mathfrak{m}}(\mathfrak{m}(E))\geq \sqrt{K}\varphi'\circ \varphi^{-1}(\mathfrak{m}(E))
$$
for all $E\subset M$, where $\varphi(t):=\frac{1}{\sqrt{2\pi}}\int_{-\infty}^{t}{\rm e}^{-b^{2}/2}d b$, $t=\frac{\varphi^{-1}(\mathfrak{m}(E))}{\sqrt{K}}$ and $\varphi'\circ \varphi^{-1}(\mathfrak{m}(E))=\frac{1}{\sqrt{2\pi}}{\rm e}^{-Kt^{2}/2}$.
\end{lem}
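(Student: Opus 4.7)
The plan is to deduce Lemma \ref{O-iso} by invoking Ohta's Bakry--Ledoux type isoperimetric inequality (Theorem 15.1 of \cite{OHTA}; see also \cite{ohtabl}) separately on the forward structure $F$ and on its reverse $\overleftarrow{F}$, and then combining the two one-sided conclusions.

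First I would apply Ohta's theorem directly to $(M, F, \mathfrak{m})$. Under the hypotheses $\mathfrak{m}(M)=1$ and ${\rm Ric}_{\infty}\geq K>0$, Ohta's argument (an $L^{1}$-localization / needle-decomposition reducing matters to a one-dimensional Gaussian isoperimetric comparison along each needle) yields the \emph{one-sided} bound
$$
\mathfrak{m}^{+}(E)\;\geq\; \sqrt{K}\,\varphi'\circ\varphi^{-1}(\mathfrak{m}(E))
$$
for every Borel subset $E\subset M$, where $\varphi$ is the Gaussian distribution function appearing in the statement. This is exactly the content of Ohta's result phrased in terms of the authors' $\mathfrak{m}^{+}$.

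Second, to obtain a matching lower bound on $\mathfrak{m}^{-}(E)$, I would apply the same theorem to $(M, \overleftarrow{F}, \mathfrak{m})$. The backward $\varepsilon$-neighborhood of $E$ with respect to $F$ coincides with the forward $\varepsilon$-neighborhood of $E$ with respect to $\overleftarrow{F}$, so $\mathfrak{m}^{-}(E)$ computed for $F$ equals $\mathfrak{m}^{+}(E)$ computed for $\overleftarrow{F}$. The crucial technical verification is that the weighted Ricci hypothesis is preserved under reversal: since geodesics of $\overleftarrow{F}$ are reversed geodesics of $F$, one computes directly that ${\rm Ric}^{\overleftarrow{F}}(y)={\rm Ric}^{F}(-y)$ and that the second-time-derivative quantity $\dot{\bf S}$ is even under $y\mapsto -y$, i.e.\ $\dot{\bf S}^{\overleftarrow{F}}(x,y)=\dot{\bf S}^{F}(x,-y)$. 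Combining these gives ${\rm Ric}_{\infty}^{\overleftarrow{F}}(y)={\rm Ric}_{\infty}^{F}(-y)\geq K F(-y)^{2}=K\overleftarrow{F}(y)^{2}$, so Ohta's theorem applies to the reverse structure and produces
$$
\mathfrak{m}^{-}(E)\;\geq\; \sqrt{K}\,\varphi'\circ\varphi^{-1}(\mathfrak{m}(E)).
$$

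Taking the minimum of these two inequalities yields $\min\{\mathfrak{m}^{+}(E), \mathfrak{m}^{-}(E)\}\geq \sqrt{K}\,\varphi'\circ\varphi^{-1}(\mathfrak{m}(E))$ for every admissible Borel set $E$; the infimum over all such sets with prescribed measure then gives the claimed bound on $I_{\mathfrak{m}}$. The main obstacle I anticipate is the reversal-invariance of ${\rm Ric}_{\infty}$, which is the one place where the authors' min-based profile forces us to depart from Ohta's verbatim statement. Establishing this reduces to checking that the distortion transforms as $\overleftarrow{\tau}(x,y)=\tau(x,-y)$ (so that ${\bf S}^{\overleftarrow{F}}(x,y)=-{\bf S}^{F}(x,-y)$) and then noting that the derivative $\dot{\bf S}$ picks up a compensating sign, which is exactly what is needed to preserve the curvature lower bound.
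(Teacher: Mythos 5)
Your proposal is correct, and it fills in precisely the gap the paper leaves implicit: the paper gives no detailed argument, saying only that its min-based profile differs slightly from Ohta's and that the lemma follows "by following Ohta's argument," i.e.\ by adapting the proof of Theorem 15.1 of \cite{OHTA} so that it also controls the backward Minkowski content. You instead keep Ohta's theorem as a black box and obtain the backward bound by applying it to the reverse structure $(M,\overleftarrow{F},\mathfrak{m})$, which requires exactly the verifications you list: $\mathfrak{m}^{-}(E)$ for $F$ is $\mathfrak{m}^{+}(E)$ for $\overleftarrow{F}$, and the hypothesis ${\rm Ric}_{\infty}\geq K$ passes to $\overleftarrow{F}$ because $\overleftarrow{\tau}(x,y)=\tau(x,-y)$ gives ${\bf S}^{\overleftarrow{F}}(x,y)=-{\bf S}^{F}(x,-y)$ and hence $\dot{\bf S}^{\overleftarrow{F}}(x,y)=\dot{\bf S}^{F}(x,-y)$, while ${\rm Ric}^{\overleftarrow{F}}(y)={\rm Ric}^{F}(-y)$; compactness makes completeness and finite reversibility automatic, so both applications of the theorem are legitimate, and taking the minimum gives the stated bound with the correct Gaussian normalization $\sqrt{K}\,\varphi'\circ\varphi^{-1}$. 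This "reverse-metric" route is arguably cleaner than re-running Ohta's proof, since it isolates the only new ingredient (reversal invariance of the curvature bound) as an elementary computation. One small inaccuracy, harmless because you only use the statement of the theorem: Ohta's proof is not an $L^{1}$-localization/needle-decomposition argument but a semigroup (heat-flow) argument in the spirit of Bakry--Ledoux, cf.\ \cite{ohtabl}.
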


By Lemma \ref{iso-phi} and Lemma \ref{O-iso}, we have
\begin{thm}\label{iso-nor}
Let $(M, F, \mathfrak{m})$ be a compact Finsler metric measure manifold satisfying $\mathfrak{m}(M)=1$ and ${\rm Ric}_{\infty}\geq K>0$. Then $(M, F, \mathfrak{m})$ has normal concentration.
\end{thm}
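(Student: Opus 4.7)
The plan is to combine Lemma \ref{iso-phi} with the isoperimetric bound provided by Lemma \ref{O-iso}, and then apply a standard Gaussian tail estimate. The only subtlety is that Lemma \ref{O-iso} gives $I_{\mathfrak{m}} \geq \sqrt{K}\,\varphi'\circ\varphi^{-1}$, with an extra factor of $\sqrt{K}$ that Lemma \ref{iso-phi} does not directly accommodate, so a rescaling is needed.

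First I would absorb the factor $\sqrt{K}$ into the profile function. Define $\tilde{\varphi}(t) := \varphi(\sqrt{K}\,t)$, where $\varphi$ is the standard Gaussian CDF from Lemma \ref{O-iso}. A direct chain-rule computation shows
\[
\tilde{\varphi}'\circ\tilde{\varphi}^{-1}(s) = \sqrt{K}\,\varphi'\circ\varphi^{-1}(s),
\]
so the hypothesis of Lemma \ref{iso-phi} is satisfied with $\tilde{\varphi}$ in place of $\varphi$. Moreover $\tilde{\varphi}$ is strictly increasing and differentiable on $\mathbb{R}$, maps into $[0,1]$, and inherits $\tilde{\varphi}^{-1}(1/2) = \varphi^{-1}(1/2)/\sqrt{K} = 0$ since $\varphi(0) = 1/2$.

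Next I would apply Lemma \ref{iso-phi} with $\tilde{\varphi}$ to obtain
\[
\alpha_{(M,F,\mathfrak{m})}(r) \leq 1 - \tilde{\varphi}\bigl(\tilde{\varphi}^{-1}(1/2) + r\bigr) = 1 - \varphi(\sqrt{K}\,r) = \frac{1}{\sqrt{2\pi}} \int_{\sqrt{K}\,r}^{\infty} e^{-b^{2}/2}\,db.
\]
A standard Gaussian tail bound (the substitution $b = u + \sqrt{K}\,r$ and the inequality $(u+\sqrt{K}\,r)^{2} \geq u^{2} + K r^{2}$ for $u, r \geq 0$) then yields
\[
\int_{\sqrt{K}\,r}^{\infty} e^{-b^{2}/2}\,db \leq e^{-Kr^{2}/2} \int_{0}^{\infty} e^{-u^{2}/2}\,du = \sqrt{\pi/2}\, e^{-Kr^{2}/2}.
\]
Combining these two estimates gives $\alpha_{(M,F,\mathfrak{m})}(r) \leq \tfrac{1}{2}\,e^{-Kr^{2}/2}$, which is the normal concentration (\ref{normal}) with constants $C = 1/2$ and $c = K/2$.

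There is essentially no hard step in this argument: the heavy lifting is already done by the Bakry--Ledoux isoperimetric inequality encoded in Lemma \ref{O-iso}. The only thing to be careful about is the rescaling that turns $I_{\mathfrak{m}} \geq \sqrt{K}\,\varphi' \circ \varphi^{-1}$ into the form $I_{\mathfrak{m}} \geq \tilde{\varphi}' \circ \tilde{\varphi}^{-1}$ required by Lemma \ref{iso-phi}, and tracking the resulting factor of $\sqrt{K}$ through the Gaussian tail. Everything else is a bookkeeping exercise tying together the two lemmas.
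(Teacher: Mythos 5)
Your proposal is correct and follows essentially the same route as the paper: the paper likewise rescales by defining $\phi(t)=\varphi(\sqrt{K}\,t)$, checks $\phi^{-1}(1/2)=0$, applies Lemma \ref{iso-phi}, and closes with the Gaussian tail bound $\int_{t}^{\infty}e^{-\tau^{2}/2}\,d\tau\leq\sqrt{\pi/2}\,e^{-t^{2}/2}$ to obtain $\alpha_{(M,F,\mathfrak{m})}(r)\leq\tfrac{1}{2}e^{-Kr^{2}/2}$. Your constants agree with the paper's.
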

\begin{proof}
 Let  $\phi(t)=\varphi(\sqrt{K}t)=\frac{1}{\sqrt{2\pi}}\int_{-\infty}^{\sqrt{K}t}{\rm e}^{-b^{2}/2}d b$.  From Lemma \ref{O-iso},  we have
$$
\phi(t)=\varphi(\sqrt{K}t)=\mathfrak{m}(E), \ \phi'\circ \phi^{-1}(\mathfrak{m}(E))=\sqrt{K}\varphi'\circ \varphi^{-1}(\mathfrak{m}(E))=\sqrt{\frac{K}{2\pi}}{\rm e}^{-Kt^{2}/2}
$$
and $\phi^{-1}(1/2)=0$. Then, by Lemma \ref{iso-phi}, we have
$$
\alpha_{(M, F, \mathfrak{m})}(r)\leq 1-\phi(r)=\frac{1}{\sqrt{2\pi}}\int^{\infty}_{\sqrt{K}r}{\rm e}^{-b^{2}/2}d b\leq \frac{1}{2} {\rm e}^{-\frac{K}{2} r^{2}}
$$
for $r>0$, where we have used the fact that $\int_{t}^{\infty}{\rm e}^{-\tau^{2}/2}d\tau\leq \sqrt{\frac{\pi}{2}}{\rm e}^{-t^{2}/2}$ for $t\geq 0$. This completes the proof.
\end{proof}

From Theorem \ref{iso-nor} and by inequality (\ref{Ob-nor}), we can obtain the following result.
\begin{cor}
Let $(M, F, \mathfrak{m})$ be a compact Finsler metric measure manifold satisfying $\mathfrak{m}(M)=1$ and ${\rm Ric}_{\infty}\geq K>0$. Then for $0<\varepsilon<1$,
$$
{\rm ObsDiam}_{\mathfrak{m}}(M; \varepsilon)\leq 2\sqrt{\frac{2}{K}\log\frac{1}{\varepsilon}}.
$$
\end{cor}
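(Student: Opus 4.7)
The strategy is essentially a one-step substitution, since all of the analytic work is already done in the preceding theorems. Specifically, the plan is to feed the Gaussian concentration estimate supplied by Theorem \ref{iso-nor} into the general observable-diameter bound (\ref{Ob-nor}) recorded at the end of Section \ref{diacon}.

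First I would re-read the proof of Theorem \ref{iso-nor}: under the hypotheses $\mathfrak{m}(M)=1$ and $\mathrm{Ric}_{\infty}\geq K>0$, it produces the explicit tail bound
$$
\alpha_{(M,F,\mathfrak{m})}(r) \;\leq\; \tfrac{1}{2}\,\mathrm{e}^{-\frac{K}{2}r^{2}}, \qquad r>0.
$$
Thus $(M,F,\mathfrak{m})$ possesses normal concentration in the sense of (\ref{normal}) with the sharp constants $C=\tfrac{1}{2}$ and $c=\tfrac{K}{2}$ explicitly identified.

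Next, I would invoke inequality (\ref{Ob-nor}), a direct corollary of Theorem \ref{osb-alpha}, which states that for any irreversible metric measure space enjoying normal concentration with constants $(C,c)$ one has
$$
{\rm ObsDiam}_{\mu}(X;\varepsilon) \;\leq\; 2\sqrt{\tfrac{1}{c}\log\tfrac{2C}{\varepsilon}}, \qquad 0<\varepsilon<1.
$$
Plugging in $C=\tfrac{1}{2}$ and $c=\tfrac{K}{2}$ makes the prefactor $2C$ collapse to $1$, so the logarithm becomes $\log(1/\varepsilon)$ and $1/c$ becomes $2/K$, yielding exactly
$$
{\rm ObsDiam}_{\mathfrak{m}}(M;\varepsilon) \;\leq\; 2\sqrt{\tfrac{2}{K}\log\tfrac{1}{\varepsilon}}.
$$

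There is no real obstacle here; the corollary is bookkeeping once Theorem \ref{iso-nor} has produced the sharp numerical constants. The only thing one must verify is that the constants from the proof of Theorem \ref{iso-nor} genuinely come out as $C=1/2$ and $c=K/2$ (which follows from the standard Mills-ratio estimate $\int_{t}^{\infty}\mathrm{e}^{-\tau^{2}/2}\,d\tau\leq \sqrt{\pi/2}\,\mathrm{e}^{-t^{2}/2}$ used there), so that the factor $2C=1$ cleanly eliminates the extra constant inside the logarithm.
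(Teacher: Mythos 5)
Your proposal is correct and matches the paper's own argument exactly: the corollary is obtained by substituting the constants $C=\tfrac{1}{2}$ and $c=\tfrac{K}{2}$ from Theorem \ref{iso-nor} into the observable-diameter bound (\ref{Ob-nor}), so that $2C=1$ and the logarithm reduces to $\log(1/\varepsilon)$. Nothing further is needed.
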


\section{Spectrum and concentration}\label{Spec}

In this section, following the arguments of Gromov-Milman \cite{Gr} and M. Ledoux \cite{Le}, we will prove an exponential concentration of closed Finsler metric measure manifolds via the first eigenvalue.  Further, we will derive a Cheng type upper bound estimate for the first closed eigenvalue by the concentration properties.

Let $(M, F, \mathfrak{m})$ be a closed Finsler metric measure manifold, the first positive eigenvalue is defined by (see \cite{SHEN})
$$
\lambda_1(M):=\inf _{u\in C^{\infty}(M)} \frac{\int_{M}\left(F^{*}(d u)\right)^2 d \mathfrak{m}}{\inf\limits_{\lambda\in \mathbb{R}}\int_{M} |u-\lambda|^2 d \mathfrak{m}}.
$$

Firstly, we prove the following theorem which gives a upper bound estimate for the concentration function controlled by the first eigenvalue on closed Finsler metric measure manifolds, which shows an exponential concentration property.

\begin{thm}\label{con-L}
Let $(M, F, \mathfrak{m})$ be a closed Finsler metric measure manifold with $\mathfrak{m}(M)=1$. Then $(M, F, \mathfrak{m})$ has exponential concentration,
\begin{equation}\label{lambda-con}
\alpha_{(M, F, \mathfrak{m})}(r)\leq {\rm e}^{-r\sqrt{\lambda_1(M)}(\log 2/\sqrt{2})},\ r>0.
\end{equation}
\end{thm}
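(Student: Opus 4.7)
The proof I would pursue is the classical Gromov--Milman strategy: use the variational characterization of $\lambda_1(M)$ to control the moment generating function of any $1$-Lipschitz test function, convert to an exponential tail bound via Markov's inequality, and transfer to the concentration function via Theorem \ref{cor-ex}. The key ingredient is that for a $1$-Lipschitz $f$ on $M$ one has $F^*(df)\le 1$ almost everywhere, so inserting $u=e^{tf/2}$ into the Rayleigh quotient defining $\lambda_1(M)$ gives $F^*(du)\le (t/2)u$, and Poincar\'e's inequality $\lambda_1(M)\int (u-\bar u)^2\,d\mathfrak m\le \int (F^*(du))^2\,d\mathfrak m$ simplifies, after normalizing $f$ so that $\int_M f\,d\mathfrak m=0$ and writing $L(t):=\int_M e^{tf}\,d\mathfrak m$, to the Gromov--Milman recursion
\[ \Bigl(1-\tfrac{t^2}{4\lambda_1(M)}\Bigr) L(t) \le L(t/2)^2, \qquad 0<t<2\sqrt{\lambda_1(M)}. \]

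Next I would iterate this recursion by repeated halving $t\mapsto t/2$, using $L(s)\to 1$ as $s\to 0^+$ (which follows from $L(0)=1$ and $L'(0)=\int f\,d\mathfrak m=0$), to produce an explicit bound on $L(t)$ on an interval $[0,t_*)\subset [0,2\sqrt{\lambda_1(M)})$. Combining with Markov's inequality $\mathfrak m(\{f\ge r\})\le e^{-tr}L(t)$ and choosing the critical value $t=\sqrt{\lambda_1(M)/2}$ (for which the iterated bound on $L(t)$ is of a size that lets a factor $\log 2$ be absorbed into the exponent) produces the one-sided tail
\[ \mathfrak m(\{f\ge r\})\le e^{-r\sqrt{\lambda_1(M)}(\log 2)/\sqrt 2}. \]
The parallel argument applied to $-f$, which is $1$-Lipschitz with respect to the reverse Finsler metric $\overleftarrow F$, yields the matching lower tail; feeding the resulting two-sided estimate into Theorem \ref{cor-ex} then converts it into the claimed bound on $\alpha_{(M,F,\mathfrak m)}$, any residual multiplicative constant being absorbed through the universal inequality $\alpha(r)\le 1/2$ valid at small $r$.

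The principal obstacles are twofold. Structurally, the irreversibility of $F$ is the essential subtlety: a $d_F$-$1$-Lipschitz function only satisfies $F^*(df)\le 1$ and not $F^*(-df)\le 1$, so the Poincar\'e-based argument is inherently one-sided and the lower tail must be treated separately through the reverse metric $\overleftarrow F$; one must check that the Rayleigh quotient defining $\lambda_1$ continues to supply the same exponential control on both sides of the mean. Numerically, the natural iteration of the Gromov--Milman recursion in fact yields a Gaussian bound $L(t)\le \exp(t^2/(2\lambda_1))$, which is stronger than what the theorem claims, so extracting exactly the constant $(\log 2)/\sqrt 2$ requires making a deliberately non-optimal choice of $t$ and carrying out the bookkeeping to fold the multiplicative prefactor neatly into the exponent.
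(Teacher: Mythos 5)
Your route---the Laplace-transform/spectral-gap argument of Aida--Stroock type rather than the paper's set-theoretic iteration---is a legitimate alternative in outline, and your structural points are sound: a $d_F$-$1$-Lipschitz $f$ satisfies $F^*(df)\le 1$ a.e., plugging $u=e^{tf/2}$ into the Rayleigh quotient does give $(1-\tfrac{t^2}{4\lambda_1(M)})L(t)\le L(t/2)^2$, and the lower tail is correctly routed through $\overleftarrow{F}$, whose first eigenvalue equals $\lambda_1(M)$ since the Rayleigh quotient of $u$ for $\overleftarrow{F}$ is that of $-u$ for $F$. For comparison, the paper argues with sets: a two-valued test function built from $\min\{d_F(\cdot,A),\rho\}$ gives $\mathfrak{m}(B)\le (1-\mathfrak{m}(A))/(1+\rho^2\lambda_1(M)\mathfrak{m}(A))$ whenever $d_F(B,A)=\rho$, and iterating over $\varepsilon$-neighborhoods with $\lambda_1(M)\varepsilon^2=2$ halves the complement mass at each step starting from the hypothesis $\mathfrak{m}(A)\ge\tfrac12$; that base case is precisely what yields (\ref{lambda-con}) with prefactor exactly $1$.

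The genuine gap in your proposal is the quantitative bookkeeping you defer. First, the side claim that the recursion yields $L(t)\le\exp(t^2/(2\lambda_1))$ is false: iteration gives $L(t)\le K(t):=\prod_{k\ge1}\bigl(1-\tfrac{t^2}{4^k\lambda_1}\bigr)^{-2^{k-1}}$, finite only for $t<2\sqrt{\lambda_1}$ and divergent at the endpoint (a spectral gap never forces normal concentration). Second, Markov plus the two tails gives $\mathfrak{m}(\{|f-\int_M f\,d\mathfrak{m}|\ge r\})\le 2K(t)e^{-tr}$, and converting this to $\alpha_{(M,F,\mathfrak{m})}$ via Theorem \ref{cor-ex}, i.e.\ Proposition \ref{f-mean}(1), halves the rate: $\alpha(r)\le 2K(t)e^{-tr/2}$. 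To reach (\ref{lambda-con}) with $c=\sqrt{\lambda_1}\log 2/\sqrt2$ you would need, for each $r$, either $\tfrac12\le e^{-cr}$ (true only for $r\le\sqrt{2/\lambda_1}$) or some $t<2\sqrt{\lambda_1}$ with $\log(2K(t))\le(\tfrac t2-c)r$. At $r$ just above $\sqrt{2/\lambda_1}$ the right-hand side is at most $\sqrt2-\log 2\approx 0.72$, while any admissible $t$ must exceed $2c\approx 0.98\sqrt{\lambda_1}$, forcing $K(t)\ge(1-t^2/(4\lambda_1))^{-1}\ge 1.3$ and hence $\log(2K(t))\ge 0.96$. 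So the prefactor cannot be absorbed in the intermediate range of $r$, and your route proves exponential concentration only with some constants $C>1$, $c>0$, not the prefactor-free inequality (\ref{lambda-con}) as stated. If you want the exact bound of the theorem by a functional argument you would have to redo the conversion step without the mean-to-median and rate-halving losses; the paper's set-based iteration sidesteps all of this.
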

\begin{proof}
Let $A$ and $B$ be measurable subsets of $M$ such that $d_{F}(B, A)=\rho>0$. Set $\mathfrak{m}(A)=a>0$ and $\mathfrak{m}(B)= b>0$. Consider  function
$$
f(x)=\frac{1}{a}-\frac{1}{\rho}\left(\frac{1}{a}+\frac{1}{b}\right) \min\{d_{F}(x, A), \rho\} .
$$
Notice that $f=\frac{1}{a}$ on $A$ and $f=-\frac{1}{b}$ on $B$. Hence we have $\nabla f=0$ on $A\cup B$ and $F(\nabla f)\leq \frac{1}{\rho}\left(\frac{1}{a}+\frac{1}{b}\right)$ almost everywhere on $M\setminus(A\cup B)$.  Moreover, it follows from the triangle inequality that $f$ is a Lipschitz function. Therefore, for any $\lambda\in \mathbb{R}$,
$$
\lambda_{1}(M) \inf_{\lambda\in \mathbb{R}}\int_{M} |f-\lambda|^{2} d\mathfrak{m}\leq\int_{M}F^{2}(\nabla f) d\mathfrak{m}\leq \frac{1}{\rho^2}\left(\frac{1}{a}+\frac{1}{b}\right)^2(1-a-b)
$$
and
\beqn
\lambda_1(M)\int_{M} |f-\lambda|^{2} d\mathfrak{m} &\geq& \lambda_1(M)\left(\int_{A}\left(\frac{1}{a}-\lambda\right)^{2} d\mathfrak{m}+\int_{B}\left(\frac{1}{b}+\lambda\right)^{2} d\mathfrak{m}\right)\\ &=&\lambda_1(M)\left(\left(\frac{1}{a}-\lambda\right)^{2}a+ \left(\frac{1}{b}+\lambda\right)^{2}b\right)\\ &\geq& \lambda_1(M)\left(\frac{1}{a}+\frac{1}{b}\right).
\eeqn
Then $\rho^2 \lambda_1(M) \leq\left(\frac{1}{a}+\frac{1}{b}\right)(1-a-b)\leq \frac{1-a-b}{ab}$, i.e., $b \leq \frac{1-a}{1+\rho^2\lambda_1(M) a}$.

Now, for any $\varepsilon>0$, let us consider a sequence of pairs $(A_{i}, B_{i})$ of subsets of $M$, such that  $A_0=A$, $A_{1}=B^{-}(A, \varepsilon)$, $A_{2}=B^{-}(A_{1}, \varepsilon)$, $\cdots$, $A_{k}=B^{-}(A_{k-1}, \varepsilon)$ and $B_0=M \backslash A_{1}$, $B_1=M\backslash A_{2}$, $\cdots$, $B_{k}=M\backslash A_{k+1}$, $k=0, 1, 2, \cdots$. By the similar discussions as above,  after $k$-steps we have
$$
b_{k} \leq \frac{1-a_{k}}{1+\lambda_1(M) \varepsilon^{2} a_{k}},
$$
where $b_{k}=\mathfrak{m}\left(B_{k}\right), a_{k}=\mathfrak{m}\left(A_{k}\right)$ and $a_{k+1}=1-b_{k}$.

In the following, take $\varepsilon$ such that $\lambda_1(M)\varepsilon^{2}=2$. For any $A\subset M$ satisfying $\mathfrak{m}(A)=a\geq 1/2$, $a_{k}\geq 1/2$ and
$$
1-a_{k+1} =b_{k}\leq \frac{1-a_{k}}{1+\lambda_1(M) \varepsilon^{2}/2} = \frac{1}{2} \left(1-a_{k}\right).
$$
By iteration, we obtain
$$
1-a_{k} \leq \frac{1}{2^{k}}\left(1-a\right)\leq \frac{1}{2^{k+1}}= {\rm e}^{-(k+1)\log 2}.
$$
Since $A_{k}\subset B^{-}(A, k\varepsilon)$, we have $1-\mathfrak{m}(B^{-}(A, k\varepsilon))\leq {\rm e}^{-(k+1)\log 2}$.

For any $r>0$, we can choose $k$ such that $k\varepsilon< r\leq (k+1)\varepsilon$.  Then
$$
1-\mathfrak{m}(B^{-}(A, r))\leq {\rm e}^{-r\sqrt{\lambda_1(M)}\log 2/\varepsilon}= {\rm e}^{-r\sqrt{\lambda_1(M)}(\log 2/\sqrt{2})}.
$$

Similarly, we can also prove that $1-\mathfrak{m}(B^{+}(A, r))\leq {\rm e}^{-r\sqrt{\lambda_1(M)}(\log 2/\sqrt{2})}$ by replacing $F$ with $\overleftarrow{F}$. This completes the proof.
\end{proof}

From Theorem \ref{con-L} and by inequality (\ref{Ob-ex}), we have the following
\begin{cor}
Let $(M, F, \mathfrak{m})$ be a closed Finsler metric measure manifold with $\mathfrak{m}(M)=1$. Then for $0<\varepsilon<1$,
$$
{\rm ObsDiam}_{\mathfrak{m}}(M; \varepsilon)\leq \frac{2\sqrt{2}}{\log 2}\log\frac{2}{\varepsilon}\cdot\frac{1}{\sqrt{\lambda_{1}(M)}}.
$$
\end{cor}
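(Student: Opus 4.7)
The plan is to combine Theorem \ref{con-L} with the general bound (\ref{Ob-ex}) that converts an exponential concentration inequality into an upper bound on the observable diameter. Everything needed is already in place; the corollary is essentially a substitution.

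First I would recall that Theorem \ref{con-L} yields
\[
\alpha_{(M, F, \mathfrak{m})}(r)\leq \mathrm{e}^{-r\sqrt{\lambda_1(M)}(\log 2/\sqrt{2})}
\]
for all $r>0$. Matching this with the general form $\alpha_{\mu}(r)\leq C\,\mathrm{e}^{-cr}$ used throughout Section \ref{con} and Section \ref{diacon}, I read off the constants
\[
C = 1,\qquad c = \frac{\log 2}{\sqrt{2}}\sqrt{\lambda_1(M)}.
\]

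Next I would invoke inequality (\ref{Ob-ex}), which says that whenever $(X,d,\mu)$ has exponential concentration with constants $C$ and $c$, then for every $0<\varepsilon<1$,
\[
{\rm ObsDiam}_{\mu}(X;\varepsilon)\leq \frac{2}{c}\log\frac{2C}{\varepsilon}.
\]
Substituting $C=1$ and $c = \frac{\log 2}{\sqrt{2}}\sqrt{\lambda_1(M)}$ gives directly
\[
{\rm ObsDiam}_{\mathfrak{m}}(M;\varepsilon)\leq \frac{2\sqrt{2}}{\log 2\cdot\sqrt{\lambda_1(M)}}\log\frac{2}{\varepsilon} = \frac{2\sqrt{2}}{\log 2}\log\frac{2}{\varepsilon}\cdot\frac{1}{\sqrt{\lambda_{1}(M)}},
\]
which is the stated bound.

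There is no genuine obstacle here, since (\ref{Ob-ex}) itself was derived earlier from Theorem \ref{osb-alpha} by inverting the concentration function. If I wanted to be fully self-contained, I could rederive the observable diameter bound by applying Theorem \ref{osb-alpha} directly to the explicit $\alpha_{\mathfrak{m}}(r)$ from Theorem \ref{con-L}: solve $\mathrm{e}^{-r\sqrt{\lambda_1(M)}(\log 2/\sqrt{2})}\leq \varepsilon/2$ for $r$, which yields $r\geq \frac{\sqrt{2}}{\log 2}\log(2/\varepsilon)/\sqrt{\lambda_1(M)}$, and then multiply by $2$ per inequality (\ref{esObs}) to recover the same constant. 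Either route is routine; the only subtlety worth double-checking is that the factor $\log(2C/\varepsilon)$ in (\ref{Ob-ex}) collapses to $\log(2/\varepsilon)$ precisely because the concentration constant in Theorem \ref{con-L} is $C=1$.
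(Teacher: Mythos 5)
Your proposal is correct and follows exactly the paper's route: the corollary is stated there as an immediate consequence of Theorem \ref{con-L} combined with inequality (\ref{Ob-ex}), i.e.\ substituting $C=1$ and $c=\frac{\log 2}{\sqrt{2}}\sqrt{\lambda_1(M)}$ into ${\rm ObsDiam}_{\mathfrak{m}}(M;\varepsilon)\leq \frac{2}{c}\log\frac{2C}{\varepsilon}$. Your constants and the resulting bound match the paper's statement.
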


In Riemannian geometry, S. Cheng \cite{cheng} discovered the first Dirichlet eigenvalue comparison of a geodesic ball and further gave the upper bounds of the eigenvalues $\mu_{m}(M)$ $(0=\mu_{0}(M)<\mu_{1}(M)\leq \mu_{2}(M)\leq \cdots)$ in terms of the diameter of $M$ and the lower bound on Ricci curvature. Later, B. Chen \cite{chen} and Z. Shen \cite{shen2} extended Cheng type upper bound estimates for the first eigenvalue to the Finsler manifolds under different conditions, respectively.

In the following, we derive a new Cheng type upper bound estimate for the first closed eigenvalue as an interesting application of Theorem \ref{con-L}.
\begin{thm}
Let $(M, F, \mathfrak{m})$ be an $n$-dimensional closed Finsler metric measure manifold with $\mathfrak{m}(M)=1$. Assume that {\rm Ric}$_{\infty}\geq K$ and  $|\tau|\leq a$ for some $K\in \mathbb{R}$ and $a\geq 0$.  Let $D$ be the diameter of $M$. Then
$$
\lambda_{1}(M)\leq \frac{C(n, a, K, D)}{D^{2}},
$$
where $C(n, a, K, D)$ is a constant depending on $n$, $a$, $K$ and $D$. Furthermore, if {\rm Ric}$_{\infty}\geq 0$, then $\lambda_{1}(M)\leq \frac{C(n, a)}{D^{2}}$, where $C(n, a)$ is a constant depending on $n$ and $a$.
\end{thm}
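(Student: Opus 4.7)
The plan is to combine the exponential concentration inequality of Theorem~\ref{con-L} with a volume-comparison lower bound for small balls at the two endpoints of a diameter-realizing pair, and then calibrate the tail decay against the common volume lower bound. Pick $p, q \in M$ with $d_{F}(p, q) = D$ and set $f(x) := d_{F}(p, x)$, which is $1$-Lipschitz with respect to $d_F$ by the triangle inequality. Applying (\ref{mf3}) together with Theorem~\ref{con-L} to $f$ yields, for a median $m_f$ of $f$ and every $r > 0$,
\[
\mathfrak{m}\bigl(\{x \in M \mid |f(x) - m_f| \geq r\}\bigr) \leq 2\, e^{-r\sqrt{\lambda_1(M)}(\log 2)/\sqrt{2}}.
\]

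Next I would invoke a Bishop--Gromov-type volume comparison for Finsler metric measure manifolds under ${\rm Ric}_{\infty} \geq K$ and $|\tau|\leq a$ to secure a common lower bound $\mathfrak{m}(B_p^+(r_0)) \geq v$ and $\mathfrak{m}(B_q^-(r_0)) \geq v$ for a suitably chosen small $r_0$. By the triangle inequality, $f < r_0$ on $B_p^+(r_0)$, while $f > D - r_0$ on $B_q^-(r_0)$. Thus whenever the exponential tail above is strictly less than $v$, neither ball can lie entirely in $\{|f - m_f| \geq r\}$; picking a point of each intersection with $\{|f - m_f| < r\}$ forces $D - r_0 - r < m_f < r_0 + r$, and hence $r > (D - 2r_0)/2$. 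Taking $r_0 := D/4$ and then the contrapositive at $r := D/4$ gives
\[
v \ \leq \ 2\, e^{-(D/4)\sqrt{\lambda_1(M)}(\log 2)/\sqrt{2}},
\]
which, solved for $\lambda_1(M)$, produces an upper bound of the form $\lambda_1(M) \leq C/D^2$ with $C$ controlled by $[\log(2/v)]^2$. In the general case $v = v(n, a, K, D)$ and this is the stated $C(n, a, K, D)$; under ${\rm Ric}_{\infty} \geq 0$ the comparison becomes $K$-free, and because the ratio $r_0/D = 1/4$ is fixed, the surviving dependence of $v$ is only on $n$ and on $a$ (through the distortion correction), giving the sharper $C(n, a)/D^{2}$.

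The main obstacle is the volume-comparison step. The hypothesis ${\rm Ric}_{\infty} \geq K$ is a Bakry--\'Emery bound at $N = \infty$ and by itself is too weak to drive a quantitative Bishop--Gromov inequality, since one cannot absorb the $\mathbf{S}^2/(N-n)$ correction without a finite effective dimension. My strategy is to use $|\tau|\leq a$ to dominate $\mathfrak{m}$ by $e^{\pm a}$ times the Busemann--Hausdorff measure, and then invoke the classical Finsler Bishop--Gromov inequality (in the form used by Shen~\cite{SHEN} and Ohta~\cite{OHTA}) for the latter. The lower bound for the backward ball $B_q^-(r_0)$ is handled by running the same argument for the reverse Finsler metric $\overleftarrow{F}$, which inherits both the distortion bound and the weighted Ricci bound, so that forward and backward small-ball estimates are available simultaneously.
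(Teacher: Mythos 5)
Your concentration half is sound and is in fact a slightly different (and arguably cleaner) route than the paper's: the paper applies the set form of Theorem \ref{con-L} directly to $A=B^{+}_{x}(D/4)$ or to its complement, splitting into two cases according to whether $\mathfrak{m}(B^{+}_{x}(D/4))\geq 1/2$, whereas you apply (\ref{mf3}) to the $1$-Lipschitz function $f=d_{F}(p,\cdot)$ and calibrate the tail at $r=D/4$ against a common small-ball lower bound at the two endpoints of a diameter; the median argument, the use of the triangle inequality to get $f>D-r_{0}$ on $B^{-}_{q}(r_{0})$, and the passage to $\overleftarrow{F}$ for the backward ball (which indeed inherits both $|\tau|\leq a$ and ${\rm Ric}_{\infty}\geq K$) are all correct and would yield a bound of the same shape.

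The genuine gap is the volume-comparison step, which you yourself flag as the main obstacle but do not resolve correctly. Your plan is to dominate $\mathfrak{m}$ by $e^{\pm a}$ times the Busemann--Hausdorff measure and then invoke the classical Finsler Bishop--Gromov inequality for the latter. This fails for two reasons. First, $|\tau|\leq a$ compares $\sigma$ with $\sqrt{\det(g_{ij}(x,y))}$ in each direction $y$, not with the Busemann--Hausdorff density; the discrepancy between the two depends on how non-Euclidean the Minkowski norms are and is not controlled by $n$ and $a$ alone, so the constant would not have the stated form. Second, and more fundamentally, the classical comparison theorems of Shen and Ohta require a lower bound on the unweighted Ricci curvature ${\rm Ric}$ (or on ${\rm Ric}_{N}$ for finite $N$ with respect to the chosen measure), and neither is available here: ${\rm Ric}={\rm Ric}_{\infty}-\dot{\bf S}$, and a pointwise bound $|\tau|\leq a$ does not bound the derivatives ${\bf S}$ and $\dot{\bf S}$ of the distortion along geodesics; moreover, replacing $\mathfrak{m}$ by the Busemann--Hausdorff measure changes ${\bf S}$, hence ${\rm Ric}_{\infty}$, in an uncontrolled way, so the hypothesis ${\rm Ric}_{\infty}\geq K$ cannot be transferred. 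The correct ingredient, which the paper uses, is a Wei--Wylie type relative volume comparison proved directly under ${\rm Ric}_{\infty}\geq K$ and $|\tau|\leq a$ (Propositions 3.2 and 3.3 of \cite{chengF}), in which the bounded distortion enters through the modified exponent $n+4a$, e.g. $\mathfrak{m}(B^{-}_{z}(D))/\mathfrak{m}(B^{-}_{z}(D/4))\leq 4^{n+4a}e^{D(n+4a)\sqrt{|K|/(n-1)}}$, with forward and backward versions available. If you substitute that lemma for your comparison step, your calibration argument closes and reproduces the theorem, including the $K=0$ refinement.
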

\begin{proof}
By Theorem \ref{con-L}, for any measurable subset $A$ in $M$ such that $\mathfrak{m}(A)\geq \frac{1}{2}$, and for every $r>0$,
\begin{equation}\label{lambda-1}
1-\min\{\mathfrak{m}(B^{+}(A, r)), \mathfrak{m}(B^{-}(A, r))\}\leq {\rm e}^{-r\sqrt{\lambda_1(M)}(\log 2/\sqrt{2})}.
\end{equation}

\textbf{Case 1.} $\mathfrak{m}(B^{+}_{x}(D/4))\geq 1/2$ for any $x\in M$. In this case,  take $z$ such that $d(x, z)=\frac{D}{2}+\frac{D}{4}$. It is easy to see that $B^{+}(B^{+}_{x}(D/4), D/4)\subset B^{+}_{x}(D/2)$ and $B^{-}_{z}(D/4)\subset M \backslash B^{+}_{x}(D/2)$. On the other hand, by the volume comparison theorems given in \cite{chengF} (i.e. \cite{chengF}, Propositions 3.2 and 3.3), we can obtain
$$
\frac{\mathfrak{m}(B^{-}_{z}(D))}{\mathfrak{m}(B^{-}_{z}(D/4))}\leq 4^{n+4a}{\rm e}^{D(n+4a)\sqrt{|K|/(n-1)}}.
$$
Hence, we can get
\beq\label{lambda-2}
1-\mathfrak{m}(B^{+}(B^{+}_{x}(D/4), D/4))& \geq & 1-\mathfrak{m}(B^{+}_{x}(D/2)) \geq \mathfrak{m}(B^{-}_{z}(D/4)) \nonumber\\
&\geq& 4^{-(n+4a)}{\rm e}^{-D(n+4a)\sqrt{|K|/(n-1)}}.
\eeq
By (\ref{lambda-1}) with $A=B^{+}_{x}(D/4)$ and (\ref{lambda-2}), we have
$$
{\rm e}^{(D/4)\sqrt{\lambda_1(M)}(\log 2/\sqrt{2})}\leq 4^{(n+4a)}{\rm e}^{D(n+4a)\sqrt{|K|/(n-1)}},
$$
from which, we obtain the following
$$
\lambda_{1}(M)\leq \frac{C(n, a, K, D)}{D^{2}},
$$
where $C(n, a, K, D)=32(n+4a)^{2}(2+D\sqrt{|K|}/(\sqrt{n-1}\log 2))^{2}$.
\vskip 2mm
\textbf{Case 2.} $\mathfrak{m}(B^{+}_{x}(D/4))< 1/2$ for any $x\in M$. Firstly, we apply (\ref{lambda-1}) to $A=M\backslash B^{+}_{x}(D/4):=B^{+}_{x}(D/4)^{c}$. Then, because $B^{+}_{x}(D/8)\subset M\backslash B^{-}\left(B^{+}_{x}(D/4)^{c}, D/8\right)$ and by using volume comparison theorems same as above, we have the following
$$
1-\mathfrak{m}(B^{-}(B^{+}_{x}(D/4)^{c}, D/8))\geq \mathfrak{m}(B^{+}_{x}(D/8))\geq 8^{-(n+4a)}{\rm e}^{-D(n+4a)\sqrt{|K|/(n-1)}}.
$$
Then, by the similar argument as in Case 1, we can derive
$$
\lambda_{1}(M)\leq \frac{C(n, a, K, D)}{D^{2}},
$$
where $C(n, a, K, D)=128(n+4a)^{2}(3+D\sqrt{|K|}/(\sqrt{n-1}\log 2))^{2}$. This completes the proof.
\end{proof}

\vskip 4mm

\end{document}